\documentclass[11pt,a4paper,reqno]{amsart}

\usepackage{graphicx, amsmath, amsthm, amssymb, enumerate, esint, mathtools,xcolor}
\usepackage[shortlabels]{enumitem}
\usepackage{amsfonts,mathrsfs,color,multirow}
\usepackage{pgf,tikz}
\usetikzlibrary{arrows}
\usepackage[toc,page]{appendix}

\graphicspath{ {./ImagesForFigures/} }
\numberwithin{equation}{section}

\DeclareMathOperator{\Tr}{Tr}
\newcommand{\R}[1]{\mathbb{R}^{#1}}
\newcommand{\N}{\mathbb{N}}
\newcommand{\Q}{\mathbb{Q}}

\newcommand{\rot}{\text{rot}}

\newcommand{\twobytwo}[4]{\left(\begin{array}{cc}
#1 & #2  \\
#3 & #4 \end{array} \right)}
\newcommand{\fpd}[2]{\frac{\partial #1}{\partial #2}}

\numberwithin{equation}{section}

\theoremstyle{plain}
\newtheorem{theorem}{Theorem}[section]
\newtheorem{prop}[theorem]{Proposition}

\theoremstyle{definition}

\newtheorem{example}[theorem]{Example}

\theoremstyle{remark}
\newtheorem{remark}[theorem]{Remark}

\allowdisplaybreaks

\title[Linear Stability of Periodic Trajectories]{Linear Stability of Periodic Trajectories in Inverse Magnetic Billiards}
\author[S. Gasiorek]{Sean Gasiorek}
\email{sean.gasiorek@sydney.edu.au}
\address{School of Mathematics and Statistics, Carslaw Building F07, University of Sydney, NSW 2006, Australia
}

\keywords{Inverse magnetic billiards, stability, linear stability, periodic orbits, periodic trajectories}
\subjclass[2020]{37J25, 37J46, 70H12, 70H14, 78A35}

\begin{document}

\begin{abstract}
We study the stability of periodic trajectories of planar inverse magnetic billiards, a dynamical system whose trajectories are straight lines inside a connected planar domain $\Omega$ and circular arcs outside $\Omega$. Explicit examples are calculated in circles, ellipses, and the one parameter family of curves $x^{2k}+y^{2k}=1$. Comparisons are made to the linear stability of periodic billiard and magnetic billiard trajectories. 
\end{abstract}

\maketitle

\section{Introduction}\label{intro}

Mathematical billiards serves as a fundamental example of a dynamical system and has been studied extensively over the last century. Connecting geometry and dynamics, mathematical billiards concerns the motion of a free particle (the ``billiard ball") under inertia in a domain (the ``billiard table") which moves at constant speed and undergoes perfectly elastic collisions with the boundary of the table.  The collisions with the boundary follow the billiard reflection law ``angle of incidence equals angle of reflection", where the tangential component of the velocity is conserved while the normal component instantly changes sign.  See e.g. \cite{Bir, KozlovTr, Tab} for a survey. 

While mathematical billiards serves as a model of certain physical phenomena, such as wave fronts and geometric optics, magnetic variants of billiards, where the billiard ball is interpreted as a charged particle  moving under the influence of a magnetic field $\boldsymbol{B}$ which satisfies the reflection law at the boundary of the billiard table, provides an extension of these same ideas to various geometric settings  (e.g. \cite{BMS2020, Gutkin2001, TabMag}) and to problems in dynamics and mathematical physics (e.g. \cite{BK, Dullin1998, KP05, RB1985, Robnik1986}). 

The definition of the magnetic field $\boldsymbol{B}$ greatly affects the dynamics but are also informed by the problem which is to be solved. The study of charged-particle dynamics in piecewise-constant magnetic fields appears in a variety of settings, such as nano- and condensed-matter physics, semiconductor design, and quantum mechanics \cite{CP2012, KPC2005, KROC2008, VTCP}. Classical, semiclassical, and quantum approaches to this system are each addressed to a degree in compact or unbounded domains depending upon the applications of interest.  

The magnetic billiard of interest is that of \emph{inverse magnetic billiards}, following the naming by \cite{VTCP}, which has only been studied in detail recently \cite{G2019, G2021}. Given a connected domain $\Omega \subset \R{2}$, define a constant magnetic field $\boldsymbol{B}$ orthogonal to the plane which has strength 0 on $\Omega$ and strength $B \neq 0$ on its complement. The classical motion of a charged particle of charge $e$ and mass $m$ with constant speed $|v|$ throughout $\Omega$ and its complement are continuous curves which are circular arcs outside $\Omega$ and straight chords inside $\Omega$. The charged particle is subject to the Lorentz force outside $\Omega$, and the resulting motion is circular arcs of fixed Larmor radius $\mu=m|v|/|eB|$. We take $eB<0$ so the Larmor arcs are traversed in the anticlockwise direction. 

This paper is organized as follows. Section \ref{IMBIntro} constructs the inverse magnetic billiard map, its derivative, and relevant properties. Section \ref{LinStab2} establishes a linear stability criteria for 2-periodic trajectories in inverse magnetic billiards, provides explicit examples, and contrasts the stability criteria with the existing linear stability criteria for standard and magnetic billiards. In Section \ref{LinStab34}, we give examples of the linear stability of 3- and 4-periodic trajectories with symmetries in various domains.

\section{Properties of the Inverse Magnetic Billiard Map}
\label{IMBIntro}

We give a brief review of inverse magnetic billiards in a convex set $\Omega \subset \R{2}$ and note that additional details can be found in \cite{G2019,G2021}.

Suppose $\Omega \subset \R{2}$ is strictly convex and parametrize the boundary $\partial \Omega = \Gamma(s)$ by arc length, $s$ in the anticlockwise direction and let $L = |\partial\Omega|$. Provided $\Gamma(s)$ is sufficiently smooth, the convexity of $\Omega$ implies the radius of curvature $\rho(s) = 1/\kappa(s)$ of $\Gamma(s)$ satisfies $0 <  \rho_{min} \leq \rho(s) \leq \rho_{max} < \infty$. In the study of magnetic billiards, Robnik and Berry \cite{RB1985,Robnik1986} classified the dynamics based upon three \emph{curvature regimes}, depending upon the relative sizes of $\mu$, $\rho_{min}$, and $\rho_{max}$: 
$$\mu < \rho_{min}, \qquad \rho_{min} < \mu < \rho_{max}, \qquad \mu> \rho_{max}.$$ 

The motion of the charged particle (the ``billiard") in this setting can be described in terms of its two geometric components: its straight-line standard billiard component and its magnetic Larmor arc. Suppose a trajectory starts at a point $P_0 := \Gamma(s_0)$ with an initial velocity vector $v_0$ pointing to the interior of $\Omega$. This initial velocity vector makes an angle $\theta_0 \in (0, \pi)$ with the positively-oriented tangent vector to $\Gamma(s)$ at $P_0$, and as the billiard travels in a straight line inside $\Omega$, it will eventually meet $\Gamma(s)$ at a new point $P_1 := \Gamma(s_1)$. Let $\ell_1 = |P_0P_1|$ be the chord length and $\theta_1$ be the angle between the tangent vector to $\Gamma(s)$ at $P_1$ and the velocity vector $\overrightarrow{P_0P_1}$. This completes the billiard-like component of motion. 

Next, because $\partial\Omega$ acts as a permeable boundary between magnetic and non-magnetic regions, the motion starting at $P_1$ in the direction $v_0$ moves along a circular Larmor arc $\gamma$ of radius $\mu$ until intersecting $\Gamma(s)$ at a point $P_2 :=\Gamma(s_2)$ where the billiard re-enters $\Omega$. The velocity vector $v_2$ of the billiard at $P_2$ is the tangent vector to the Larmor circle at $P_2$ and makes an angle $\theta_2$ with the tangent vector to $\Gamma(s)$ at $P_2$. The re-entry point $P_2$ is well-defined, as the Larmor circle is tangent to the ray $\overrightarrow{P_0P_1}$ at $P_1$. Let $\ell_2 = |P_1P_2|$ be the chord length connecting the exit and re-entry points $P_1$ and $P_2$ of $\Omega$; let $\chi_{0,2}$ be the angle between $v_0$ and the ray $\overrightarrow{P_1P_2}$; let $\mathcal{S}_{0,2}$ be the area inside the Larmor arc $\gamma_{0,2}$ but outside $\Omega$; let $\kappa_i := \kappa(s_i)$ be the curvature of $\Gamma$ at $s_i$; and let $\mathcal{A}_{0,2}$ denote the area of the ``cap" between the chord $\ell_2$ and $\Gamma(s)$ that is inside the Larmor circle.  See figure \ref{IMBMap}. 

\begin{figure}[thp]
\includegraphics[width=0.6\textwidth]{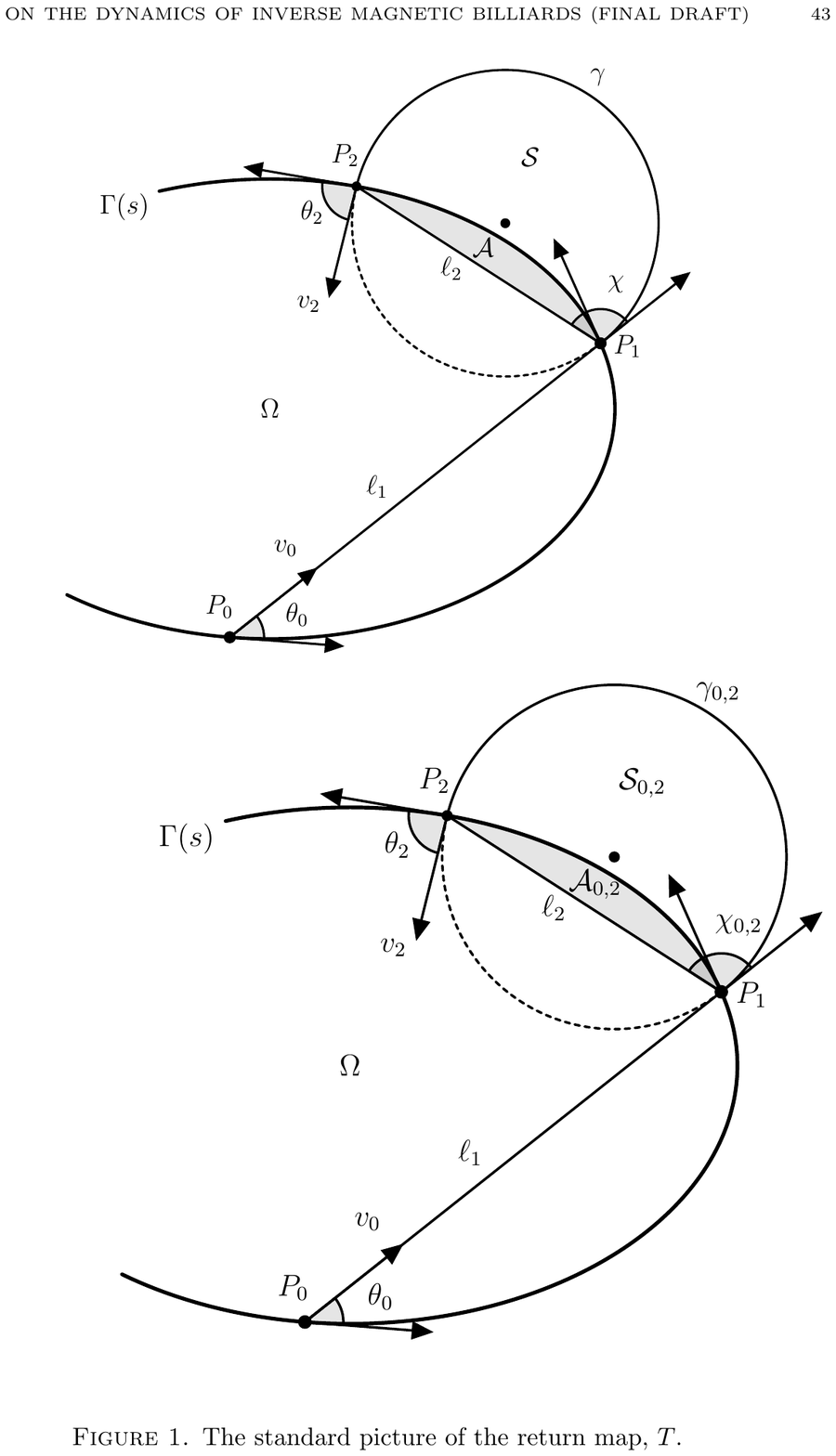} 
\caption{A labeled diagram of both components of motion of the inverse magnetic billiard trajectory.}
\label{IMBMap}
\end{figure}

\begin{remark} In general, we will keep the same numbering convention described above, so that the quantities $\theta_{2i}$, $\theta_{2i+1}$, $\theta_{2i+2}$, $\chi_{2i,2i+2}$, $\ell_{2i+1}$, $\ell_{2i+2}$, etc. are all associated with the trajectory from $P_{2i} \to P_{2i+2}$. 
\end{remark}

From the above definitions, we can encode the dynamics through the inverse magnetic billiard map $T$ on the open annulus $\R{}/L\mathbb{Z} \times (0,\pi) \approx [0,L) \times (0,\pi)$, which sends each successive reentry point and angle of the inward pointing velocity vector to that of the next reentry point and angle:
$$T: [0,L) \times (0,\pi) \to [0,L) \times (0,\pi), \qquad (s_i,\theta_i) \mapsto (s_{i+2},\theta_{i+2}).$$
By assuming that motion stops when $\theta = 0$ or $\pi$, we can extend the domain of this map to the closed annulus $\mathbb{A}:= [0, L) \times [0,\pi]$; that is, by continuity, $T(s,0) = (s,0)$ and $T(s,\pi) = (s,\pi)$.

The map $T$ preserves the symplectic area form $\sin(\theta) ds \wedge d\theta$ on the annulus $\mathbb{A}$. This informs the definition of area-preserving coordinates, $u_i := -\cos(\theta_i)$.

\begin{prop}[\cite{G2019,G2021}]
Suppose $\partial\Omega$ is of class $C^k$. Then the inverse magnetic billiard map $T$ is $C^{k-1}$ and $DT = \twobytwo{\fpd{s_2}{s_0}}{\fpd{s_2}{u_0}}{\fpd{u_2}{s_0}}{\fpd{u_2}{u_0}}$ with 
\begin{align*}
\fpd{s_2}{s_0} &= \frac{\kappa_0\ell_1\sin(2\chi_{0,2}-\theta_1) - \sin(\theta_0)\sin(2\chi_{0,2}-\theta_1)-\kappa_0\ell_2\cos(\chi_{0,2})\sin(\theta_1)}{\sin(\theta_1)\sin(\theta_2)} \\
\fpd{s_2}{u_0} &= \frac{\ell_1\sin(2\chi_{0,2}-\theta_1) - \ell_2\cos(\chi_{0,2})\sin(\theta_1)}{\sin(\theta_0)\sin(\theta_1)\sin(\theta_2)} \\
\fpd{u_2}{s_0} &= \frac{\kappa_2\sin(\theta_0)\sin(2\chi_{0,2}-\theta_1)}{\sin(\theta_1)} + 
\frac{2\sin(\chi_{0,2}) \sin(2\chi_{0,2}-\theta_1-\theta_2) (\kappa_0\ell_1-\sin(\theta_0))}{\ell_2\sin(\theta_1)} \\
&\;\;\;\;\; -\kappa _0 \left(\sin(2\chi_{0,2}-\theta_2)+\frac{\kappa_2 \ell_1\sin(2\chi_{0,2}-\theta_1)}{\sin(\theta_1)}  - \kappa _2\ell_2\cos(\chi_{0,2}) \right)\\
\fpd{u_2}{u_0} &= \frac{2\ell_1\sin(\chi_{0,2} )\sin(2\chi_{0,2}-\theta_1-\theta_2)-\kappa_2\ell_1\ell_2 \sin(2 \chi_{0,2} -\theta _1)}{\ell_2\sin(\theta_0)\sin(\theta_1)} \\
&\;\;\;\;\; + \frac{ \kappa _2 \ell_2 \cos (\chi_{0,2})-\sin(2\chi_{0,2}-\theta _2)}{\sin(\theta_0)}.
\end{align*}
Furthermore, $\det(DT) =1$. 
\label{JacobianProp} 
\end{prop}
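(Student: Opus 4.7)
The natural approach is to exploit the factorization $T = T_2 \circ T_1$, where $T_1\colon (s_0,u_0)\mapsto(s_1,u_1)$ is the straight-chord leg from $P_0$ to $P_1$ inside $\Omega$ and $T_2\colon (s_1,u_1)\mapsto(s_2,u_2)$ is the Larmor-arc leg from $P_1$ to $P_2$ outside $\Omega$. Each leg is a twist map on the annulus and preserves the symplectic form $du\wedge ds$: $T_1$ admits the standard chord-length generating function $H_1(s_0,s_1) = |\Gamma(s_0)-\Gamma(s_1)|$, and $T_2$ admits an analogous magnetic generating function of the form $H_2(s_1,s_2) = \ell_2 - 2\mu\chi_{0,2}$ (or, equivalently, one built from the Larmor area $\mathcal{S}_{0,2}$). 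Once $DT_1$ and $DT_2$ have been obtained separately, the chain rule $DT = DT_2 \cdot DT_1$ yields the four entries displayed in the statement, and multiplicativity of determinants together with the symplecticity of each factor immediately gives $\det DT = 1$.

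The first computation, $DT_1$, is standard in billiard theory: differentiating the relations $\partial H_1/\partial s_0 = -\cos\theta_0$ and $\partial H_1/\partial s_1 = \cos\theta_1$ with respect to $s_0$ and $u_0$, using $u_i = -\cos\theta_i$ and the Frenet identity $\Gamma''(s) = \kappa(s)N(s)$, expresses the four partials of $T_1$ as rational functions of $\ell_1, \theta_0, \theta_1, \kappa_0, \kappa_1$. The Larmor leg $DT_2$ requires first unpacking the arc geometry: let $C$ be the center of $\gamma_{0,2}$, with $|CP_1|=|CP_2|=\mu$ and $CP_1 \perp v_0$; since the velocity rotates by the central angle $2\chi_{0,2}$ along the arc, elementary trigonometry on the isoceles triangle $\triangle CP_1P_2$ yields $\ell_2 = 2\mu\sin\chi_{0,2}$ together with the angular relations among $\chi_{0,2}, \theta_1, \theta_2$ that appear in the stated formulas. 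Implicit differentiation of the generating-function relations for $H_2$ then produces $DT_2$ as a rational expression in $\ell_2, \chi_{0,2}, \theta_1, \theta_2, \kappa_1, \kappa_2, \mu$.

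The remaining step---forming the product $DT_2 \cdot DT_1$ and using $\ell_2 = 2\mu\sin\chi_{0,2}$ to eliminate $\mu$---is where the real work lies. The main obstacle is algebraic bookkeeping: one must verify that the $\kappa_1$-dependent contributions from the two factors cancel in the product (as they must, since the boundary curvature at the intermediate exit point $P_1$ does not appear in any of the stated entries of $DT$), and that what remains collapses to the compact trigonometric forms listed in the proposition. The determinant conclusion $\det DT = 1$ is already assured by the symplecticity argument in the first paragraph and so requires no additional computation.
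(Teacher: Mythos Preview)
The paper does not actually prove this proposition: it is quoted verbatim from the author's earlier work \cite{G2019,G2021} and no argument is supplied here. So there is no ``paper's own proof'' to compare against in this document.

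That said, your plan is the natural one and almost certainly coincides with what is done in the cited references. Decomposing $T=T_2\circ T_1$ into the straight chord and the Larmor arc, recognising each leg as a symplectic twist map with generating function (chord length for $T_1$, and a length-minus-arclength expression such as $\ell_2-2\mu\chi$ for $T_2$), and then multiplying Jacobians is exactly how these formulas are obtained in practice. Your observation that the $\kappa_1$ terms must cancel in the product $DT_2\cdot DT_1$ is correct and is a useful sanity check, since the stated entries involve only $\kappa_0$ and $\kappa_2$. The determinant claim follows immediately from the symplecticity of each factor, as you note.

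One small point of care: the leg $T_1$ is \emph{not} the standard billiard map, since there is no reflection at $P_1$; the particle simply crosses $\partial\Omega$ with velocity unchanged. The generating-function relations you wrote ($\partial H_1/\partial s_0=-\cos\theta_0$, $\partial H_1/\partial s_1=\cos\theta_1$) are still correct for the chord length, but make sure the sign conventions for $u_1$ are consistent with the paper's (here $u_1=-\cos\theta_1$, so $\partial H_1/\partial s_1=-u_1$). This only affects intermediate signs and does not change the structure of your argument. Beyond that, what you describe as ``algebraic bookkeeping'' is genuinely the bulk of the work, and your proposal correctly identifies it as such rather than claiming it is trivial.
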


\begin{remark}
The preceding proposition shows that the entries of the derivative matrix $DT$ do not directly depend upon the magnetic field or Larmor radius $\mu$. However, simple geometry yields the equation $\ell_{2i+2} = 2\mu \sin(\chi_{2i,2i+2})$, so the quantities above can be stated using $\mu$. Our analysis in the next sections makes use of this equation. 
\end{remark}

The derivative matrix $DT$ from Prop. \ref{JacobianProp}, interpreted as a linearization of $T$, can be used to determine the stability of periodic trajectories. If we let $z = (s,u)$, the 
\emph{stability matrix} is 
\begin{equation}
S_n(z) = DT(T^{n-1}(z)) \cdots DT(T(z)) DT(z).
\end{equation}
If $z$ is part of an orbit of period $n$, the orbit is \emph{hyperbolic} if $|\Tr S_n(z)| >2$, \emph{parabolic} if $|\Tr S_n(z)| =2$, and \emph{elliptic} if $|\Tr S_n(z)| <2$. To simplify notation, we will drop the dependence upon $z$ and largely refer to $\Tr S_n$.

\section{Linear Stability of 2-Periodic Trajectories}
\label{LinStab2}

The linear stability of standard billiards has been studied recently \cite{KozlovTr, K2000}, though the formulation was known earlier in the 20th century \cite{BB,W1986}. In particular, the linear stability of 2-periodic billiard trajectories can be simply stated in terms of the length of the trajectory and the radii of curvature of the boundary at the two points of impact, $\rho_1$ and $\rho_2$. 

\begin{prop}[\cite{KozlovTr}]
Suppose $\rho_1 <\rho_2$ are the radii of curvature of the billiard table at the impact points of a 2-periodic billiard trajectory and let $l$ denote the distance between the two impact points. If either of the inequalities $\rho_1 < l < \rho_2$ or $l > \rho_1 + \rho_2$ are satisfied, the 2-periodic billiard trajectory will be hyperbolic, while in the case $0 < l < \rho_1$ or $\rho_2 < l  < \rho_1 + \rho_2$ the trajectory is elliptic. If $l = \rho_1$, $\rho_2$, or $\rho_1 + \rho_2$, then the trajectory is parabolic. 
\label{BilliardLinStab}
\end{prop}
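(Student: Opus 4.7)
The plan is to compute the stability matrix $S_2$ at a 2-periodic orbit directly and then read off the three regimes from the sign of $|\Tr S_2| - 2$. First, observe that any 2-periodic orbit $P_1 \to P_2 \to P_1$ in a convex billiard must strike $\partial\Omega$ perpendicularly at both impact points: the reflection law, combined with the requirement that the outgoing velocity at $P_2$ coincide with $-\,\overrightarrow{P_1P_2}$, forces the incoming velocity to be purely normal, so $\theta_1 = \theta_2 = \pi/2$. This eliminates all angular dependence and leaves only $l, \rho_1, \rho_2$ in the Jacobian.

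Next, I would specialize the standard-billiard analogue of Proposition \ref{JacobianProp} (obtained by collapsing the Larmor arc, or equivalently derived from the classical billiard generating function $-|P_iP_{i+1}|$) to $\theta=\pi/2$ at both reflections. With $\kappa_i = 1/\rho_i$, this produces two $2\times 2$ matrices $M_1$ and $M_2$ that differ only by interchange of $\rho_1$ and $\rho_2$, and a short computation of $\Tr(M_2 M_1)$ yields an expression of the form
\[
\Tr S_2 \;=\; \frac{4l^2}{\rho_1\rho_2} - \frac{4l}{\rho_1} - \frac{4l}{\rho_2} + 2.
\]

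From here the classification is purely algebraic. The inequality $\Tr S_2 > 2$ collapses to $l(l-\rho_1-\rho_2) > 0$, i.e., $l > \rho_1+\rho_2$, while $\Tr S_2 < -2$ factors as $(l-\rho_1)(l-\rho_2) < 0$, i.e., $\rho_1 < l < \rho_2$; together these are the hyperbolic regimes. The complementary open intervals $0 < l < \rho_1$ and $\rho_2 < l < \rho_1+\rho_2$ give $|\Tr S_2| < 2$ (elliptic), and the three boundary values $l \in \{\rho_1, \rho_2, \rho_1+\rho_2\}$ give $|\Tr S_2| = 2$ (parabolic), completing the trichotomy.

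The main obstacle is not conceptual but bookkeeping: one must recall the correct form of the standard billiard Jacobian in the area-preserving coordinates $(s,u)$, track sign conventions in the product $M_2 M_1$ (using $\det M_i = 1$ as a consistency check), and verify that the resulting quadratic in $l$ factors as claimed. No information about the magnetic field enters, so the proof ultimately reproduces the classical computation in \cite{KozlovTr, W1986}.
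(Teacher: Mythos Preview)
Your approach is correct and is precisely one of the two routes the paper itself indicates: the paper does not give a detailed proof of this proposition but states that it ``follows from a generating function argument \cite{KozlovTr} or by a calculation and analysis of $\Tr S_2$,'' and later records exactly your trace formula
\[
\Tr S_2 = 2 - 4l\left(\frac{1}{\rho_1}+\frac{1}{\rho_2}\right) + \frac{4l^2}{\rho_1\rho_2}
\]
in the example on the curves $x^{2k}+y^{2k}=1$. Your factorizations $\Tr S_2 - 2 = \dfrac{4l}{\rho_1\rho_2}(l-\rho_1-\rho_2)$ and $\Tr S_2 + 2 = \dfrac{4}{\rho_1\rho_2}(l-\rho_1)(l-\rho_2)$ are exactly what is needed, so the proposal stands as a complete proof sketch along the second of the paper's two suggested routes.

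One small remark: you invoke convexity of $\Omega$ to force $\theta_1=\theta_2=\pi/2$, but this is unnecessary---the reflection law alone forces perpendicular incidence at any 2-periodic orbit, and indeed the paper explicitly notes that ``the argument does not depend upon the convexity of $\Omega$ and still holds if one or both of $\rho_1$, $\rho_2$ are negative.'' Dropping that hypothesis costs nothing and brings your statement in line with the full scope of the proposition.
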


The proof of this proposition follows from a generating function argument \cite{KozlovTr} or by a calculation and analysis of $\Tr S_2$. The argument does not depend upon the convexity of $\Omega$ and still holds if one or both of $\rho_1$, $\rho_2$ are negative. With this stability criteria for 2-periodic billiard trajectories in mind, we investigate the linear stability of 2-periodic trajectories in the setting of  inverse-magnetic billiards.

\subsection{2-Periodic Trajectories in a Strictly Convex Set} 
Suppose $\Omega$ is strictly convex and consider a 2-periodic inverse magnetic billiard trajectory. In particular, the periodicity means $\kappa_4 = \kappa_0$ and $\theta_4 = \theta_0$. 
Geometrically, 2-periodic trajectories are in the shape of stadia -- a rectangle capped by two semicircles on opposite sides -- and was stated as an offhand remark in \cite{G2021} (and is clear if one draws a picture). The following proposition adds more structure to this statement. 

\begin{prop}
An inverse magnetic billiard trajectory in a strictly convex set is 2-periodic if and only if the angles $\chi_{0,2}$, $\chi_{2,4}$ in successive iterations of $T$ are both equal to $\pi/2$. 
\label{2PeriodicConvexProp}
\end{prop}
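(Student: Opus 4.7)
The plan rests on two elementary geometric facts about a single Larmor arc of radius $\mu$ traversed anticlockwise. First, from the chord-length identity $\ell_{2i+2} = 2\mu\sin(\chi_{2i,2i+2})$ mentioned in the remark after Proposition~\ref{JacobianProp}, the condition $\chi_{0,2} = \pi/2$ is equivalent to the chord joining the exit point $P_1$ and reentry point $P_2$ being a diameter of the Larmor circle, equivalently the arc being a semicircle, equivalently $v_2 = -v_0$ for the unit velocities at the two ends of the arc. Second, since the motion is anticlockwise, each Larmor center sits at distance $\mu$ to the left of the velocity at either endpoint: writing $R$ for counterclockwise rotation by $\pi/2$, one has $C_1 = P_1 + \mu R v_0 = P_2 + \mu R v_2$, and analogously $C_2 = P_3 + \mu R v_2 = P_0 + \mu R v_0$ for the second arc.

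For the forward implication, assume the trajectory is $2$-periodic with points $P_0, P_1, P_2, P_3, P_4=P_0$ and unit chord velocities $v_0$ (along $P_0P_1$ and at $P_4 = P_0$) and $v_2$ (along $P_2P_3$). Subtracting the two expressions for each center above yields
\[
\ell_1 v_0 \;=\; P_1 - P_0 \;=\; C_1 - C_2 \;=\; P_2 - P_3 \;=\; -\ell_3 v_2.
\]
Since $\ell_1, \ell_3 > 0$ and $v_0, v_2$ are unit vectors, this forces $v_2 = -v_0$ (and $\ell_1 = \ell_3$ as a bonus). By the equivalence in the first paragraph, $\chi_{0,2} = \pi/2$, and the same reasoning applied to the second arc gives $\chi_{2,4} = \pi/2$.

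For the converse, assume $\chi_{0,2} = \chi_{2,4} = \pi/2$. Each arc is then a semicircle, so $v_2 = -v_0$, $v_4 = -v_2 = v_0$, and the diameter relation gives $P_2 = P_1 + 2\mu R v_0$ and $P_4 = P_3 + 2\mu R v_2$. Combining these with $P_3 = P_2 + \ell_3 v_2$, the $2\mu R v_0$ contributions cancel and we obtain
\[
P_4 \;=\; P_1 - \ell_3 v_0,
\]
placing $P_4$ on the line through $P_0$ and $P_1$. Since $\Omega$ is strictly convex, this line meets $\partial\Omega$ in exactly the two points $\{P_0, P_1\}$; because $P_4 \in \partial\Omega$ and $P_4 \neq P_1$ (as $\ell_3 > 0$), we conclude $P_4 = P_0$. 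Together with $v_4 = v_0$, this is exactly $T^2(s_0, \theta_0) = (s_0, \theta_0)$.

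\textbf{Main obstacle.} The forward implication reduces to bookkeeping with the Larmor-center positions. The converse is subtler: fixing $\chi_{0,2} = \chi_{2,4} = \pi/2$ determines every velocity direction but does not visibly pin down $P_4$. The key move is to notice that $P_4$ is forced simultaneously onto $\partial\Omega$ (by the dynamics) and onto the chord line $P_0P_1$ (by the explicit computation), so that strict convexity of $\Omega$ delivers $P_4 = P_0$. This is precisely where the strict convexity hypothesis in the statement is used.
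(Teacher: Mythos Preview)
Your proof is correct, and your converse is essentially the paper's argument: both observe that the semicircle conditions force $P_4$ onto the line through $P_0$ and $P_1$, and then invoke strict convexity to conclude. The paper phrases this as ``if $\ell_1 \neq \ell_3$ then three distinct collinear boundary points contradict convexity,'' while you go straight to $P_4 \in \{P_0,P_1\}$ and rule out $P_1$; these are the same idea.

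Your forward implication, however, is genuinely different from the paper's. The paper argues by contradiction: if $\chi_{0,2} > \pi/2$, the two tangent lines to $\gamma_{0,2}$ at $P_1,P_2$ meet at a point $Q$, and periodicity would force the second Larmor arc $\gamma_{2,4}$ to be tangent to the same two lines while trapped in the region bounded by $Q$, the tangents, and $\gamma_{0,2}$; equal radii then force $\gamma_{0,2}\cup\gamma_{2,4}$ to be a full circle. Your route is purely algebraic: writing each Larmor center two ways (once from each endpoint) and subtracting gives $C_1 - C_2 = \ell_1 v_0 = -\ell_3 v_2$, which immediately yields $v_2 = -v_0$ and hence $\chi_{0,2} = \chi_{2,4} = \pi/2$. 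Your argument is shorter, avoids any case analysis on the sign of $\chi-\pi/2$, and delivers $\ell_1 = \ell_3$ as a free byproduct. The paper's argument, on the other hand, is more visual and makes the ``stadium'' shape of the orbit geometrically transparent.
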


\begin{proof}
Suppose a trajectory is 2-periodic and $\chi_{0,2}, \chi_{2,4} \neq \pi/2$. Without loss of generality, suppose $\chi_{0,2} > \pi/2$. Then the tangent lines to the Larmor arc $\gamma_{0,2}$ at $P_1$, $P_2$ must intersect at some point $Q$. Then the second Larmor arc $\gamma_{2,4}$ must also be tangent to these two lines at the points $P_3$ and $P_4 = P_0$ and lie within the compact set bounded by $Q$, the tangent lines, and $\gamma_{0,2}$. However, since the two Larmor arcs $\gamma_{0,2}$, $\gamma_{2,4}$ have equal radii, the only way this can happen is if $\gamma_{0,2} \cup \gamma_{2,4}$ is the complete Larmor circle of radius $\mu$, making the trajectory not 2-periodic. 

Suppose $\chi_{0,2} = \chi_{2,4} = \pi/2$. Then $\ell_1$ and $\ell_3$ are parallel and the Larmor arcs $\gamma_{0,2}$, $\gamma_{2,4}$ are semicircles. If $\ell_1 > \ell_3$, then $\partial\Omega$ cannot be convex, as $P_0$, $P_1$, and $P_4$ are collinear. A similar argument holds for $\ell_1 < \ell_3$. Thus $\ell_1 = \ell_3$ and the trajectory is a stadium.   
\end{proof}

\begin{remark}
As 2-periodic trajectories are stadia, we also mention the strict convexity of $\Omega$ necessarily implies that each $\theta_{2i}, \theta_{2i+1}$ must have angle measure strictly less than $\pi/2$. Further, it is also the case that $\ell_{2} = \ell_4 = 2\mu$. 
\label{ThetaValues}
\end{remark}

With the preceding proposition, we can simplify $DT$ from Prop. \ref{JacobianProp} into 

\begin{equation}
DT  = \left(\begin{array}{cc}
\frac{\kappa_0 \ell_1 - \sin(\theta_0)}{\sin(\theta_2)} & \frac{\ell_1}{\sin(\theta_0)\sin(\theta_2)}  \\
\frac{(\kappa_0 \ell_1 - \sin(\theta_0))(\sin(\theta_1 + \theta_2)-\kappa_2\mu\sin(\theta_1))}{\mu \sin(\theta_1)} -\kappa_0 \sin(\theta_2) &  \frac{\ell_1( \sin(\theta_1+\theta_2) - \kappa_2\mu \sin(\theta_1))}{\mu\sin(\theta_0)\sin(\theta_1)} - \frac{\sin(\theta_2)}{\sin(\theta_0)}  
\end{array} \right)
\label{DTPeriod2}
\end{equation}
which in turn allows us to compute the stability matrix $S_2(s_0, u_0)$ and its trace
\begin{equation}
\Tr S_2 = 2-2\alpha (\beta + \delta) + \alpha^2 \beta\delta \\
\label{TrEqn}
\end{equation}
with 
\begin{equation}
\alpha = \ell_1/\mu,\;\;\;\beta = \cot(\theta_0) + \cot(\theta_3),\;\;\;\delta = \cot(\theta_1) + \cot(\theta_2).
\label{alphabetadelta}
\end{equation} 
By Remark \ref{ThetaValues}, $\beta$ and $\delta$ are positive, and it follows that $\alpha$ is positive as it is the ratio of two positive quantities. A direct calculation proves the following theorem. 

\begin{theorem}
Let $\alpha$, $\beta$, $\delta$ be defined as above and suppose $\Omega$ is strictly convex. Let $m = \min\{2/\beta, 2/\delta\}$ and $M = \max\{2/\beta, 2/\delta\}$. If $\beta \neq \delta$, then the 2-periodic trajectory is
\begin{itemize}
\item parabolic if and only if $\alpha = m$, $M$ or $m+M$; 
\item elliptic if and only if $\alpha \in (0,m) \cup (M, m+M)$; 
\item hyperbolic if and only if $\alpha \in (m,M) \cup (m+M,\infty)$. 
\end{itemize}
If $\beta = \delta$, then $m=M$ and the 2-periodic trajectory is 
\begin{itemize}
\item parabolic if and only if $\alpha = m$ or $2m$; 
\item elliptic if and only if $\alpha \in (0,m) \cup (m,2m)$; 
\item hyperbolic if and only if $\alpha \in (2m,\infty)$. 
\end{itemize} 
Figure \ref{Period2StabilityDiagram} presents a graphical representation of the above statement. 
\label{Period2Stability}
\end{theorem}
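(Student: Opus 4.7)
The setup already gives us the trace $\Tr S_2 = 2 - 2\alpha(\beta+\delta) + \alpha^2\beta\delta$ from \eqref{TrEqn}, so the plan is to analyze this as an upward-opening parabola $P(\alpha)$ in the single variable $\alpha > 0$, with positive parameters $\beta, \delta$ (positivity guaranteed by Remark \ref{ThetaValues}, since each $\theta_i \in (0,\pi/2)$ makes each cotangent positive). Note $P(0) = 2$. The linear-stability trichotomy reduces to locating $\alpha$ relative to the solution sets of $P(\alpha) = 2$ and $P(\alpha) = -2$.

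First I would solve $P(\alpha) = 2$, obtaining $\alpha\bigl(\alpha\beta\delta - 2(\beta+\delta)\bigr) = 0$, whose unique positive root is $\alpha = 2/\beta + 2/\delta = m + M$. Next, solving $P(\alpha) = -2$ gives the quadratic $\beta\delta\alpha^2 - 2(\beta+\delta)\alpha + 4 = 0$ with discriminant $4(\beta-\delta)^2$; the two roots simplify cleanly to $2/\beta$ and $2/\delta$, that is, $m$ and $M$.

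If $\beta \neq \delta$, then $0 < m < M < m+M$ are three distinct positive values at which $|P(\alpha)| = 2$. Combined with $P(0) = 2$ and the convex shape of an upward parabola, this forces $P > 2$ on $(m+M,\infty)$, $P < -2$ on $(m,M)$, and $-2 < P < 2$ on $(0,m) \cup (M, m+M)$. Translating $|\Tr S_2| > 2$, $=2$, $<2$ into hyperbolic/parabolic/elliptic yields the first half of the theorem. If instead $\beta = \delta$, the discriminant of $P(\alpha) = -2$ vanishes, so the parabola is tangent to the line $y = -2$ at the double root $\alpha = m = M$ and lies strictly above $y = -2$ elsewhere; meanwhile $P(\alpha) = 2$ still solves at $\alpha = 2m$. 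This collapses the picture into the second half of the theorem: an isolated parabolic point at $\alpha = m$, elliptic behavior on $(0,m) \cup (m,2m)$, parabolic again at $\alpha = 2m$, and hyperbolic on $(2m,\infty)$.

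There is no serious obstacle; the argument is a clean quadratic analysis. The only point needing care is the degenerate case $\beta = \delta$, where the hyperbolic interval $(m,M)$ collapses to a single parabolic point and must be reclassified. The upstream ``direct calculation" behind \eqref{TrEqn} is a routine multiplication of the two $DT$ matrices from \eqref{DTPeriod2} using the 2-periodic identities $\chi_{0,2} = \chi_{2,4} = \pi/2$, $\ell_1 = \ell_3$, $\ell_2 = \ell_4 = 2\mu$, $\kappa_4 = \kappa_0$, and $\theta_4 = \theta_0$ furnished by Prop.\ \ref{2PeriodicConvexProp} and Remark \ref{ThetaValues}, together with standard trigonometric simplifications.
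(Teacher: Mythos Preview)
Your proposal is correct and is essentially the same approach as the paper, which simply asserts that ``a direct calculation proves the following theorem'' without spelling out the details. Your explicit factorization of $P(\alpha)\pm 2$ and the parabola analysis is exactly the direct calculation the paper has in mind.
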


As stated in Proposition \ref{BilliardLinStab}, the linear stability of a 2-periodic billiard trajectory is dependent upon the relative sizes of the length of the chord and the curvature of the boundary at the points of impact. 
In magnetic billiards, the linear stability is dependent upon the curvature at the points of impact and the associated diametric chord length (\cite{KP05, RB1985}). The linear stability criteria is precisely the same criteria as standard billiards, implying the linear stability of magnetic billiards is independent of the magnetic field. However, the linear stability of periodic trajectories in the inverse magnetic billiard setting is indirectly dependent upon the magnetic field through the term $\alpha$. And in contrast to the standard billiard or magnetic billiard settings, the linear stability of inverse magnetic billiards is not dependent upon the curvature of the boundary at the exit and re-entry points. 

\begin{figure}[th]
\begin{tikzpicture}[>=stealth']
\draw[->] (0,0) -- (10,0);
\foreach \x in {0,2.5,5,7.5}
\draw[shift={(\x,0)},color=black] (0pt,3pt) -- (0pt,-3pt);
\node[below] at (0,0) {0};
\node[above] at (1.25,0) {\footnotesize $E$};
\node[below] at (2.5,0) {$m$};
\node[above] at (2.5,0.1) {\footnotesize $P$};
\node[above] at (3.75,0) {\footnotesize $H$};
\node[below] at (5,0) {$M$};
\node[above] at (5.0,0.1) {\footnotesize $P$};
\node[above] at (6.25,0) {\footnotesize $E$};
\node[below] at (7.5,0) {$m+M$};
\node[above] at (7.5,0.1) {\footnotesize $P$};
\node[above] at (8.75,0) {\footnotesize $H$};
\node[right] at (10,0) {$\alpha$};

\draw[->] (0,-2) -- (10,-2);
\foreach \x in {0,3.33,6.67}
\draw[shift={(\x,-2)},color=black] (0pt,3pt) -- (0pt,-3pt);
\node[below] at (0,-2) {0};
\node[above] at (1.67,-2) {\footnotesize $E$};
\node[below] at (3.33,-2) {$m$};
\node[above] at (3.33,-1.9) {\footnotesize $P$};
\node[above] at (5,-2) {\footnotesize $E$};
\node[below] at (6.67,-2) {$2m$};
\node[above] at (6.67,-1.9) {\footnotesize $P$};
\node[above] at (8.33,-2) {\footnotesize $H$};
\node[right] at (10,-2) {$\alpha$};

\end{tikzpicture}
\caption{A stability diagram illustrating Theorem \ref{Period2Stability} when $\beta \neq \delta$ (above) and $\beta = \delta$ (below). }
\label{Period2StabilityDiagram}
\end{figure}
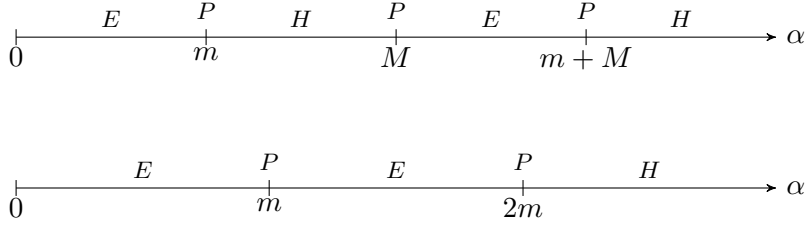

\begin{example}
Consider the case where $\partial\Omega$ is the circle of radius $R$. Then $\theta_0= \theta_1 = \theta_2 = \theta_3$ and hence $\beta = \delta$. It is then necessarily true that $\mu < R$, as $\mu \geq R$ makes the stadia larger than $\Omega$ itself. A quick calculation shows that $\alpha = 4/\beta = 2m$ and hence all 2-periodic trajectories in the circle are parabolic.  
\end{example}

\begin{example}\label{IMBEllipse}
Consider the case where $\partial \Omega$ is the ellipse $\mathcal{E}: x^2/a^2 + y^2/b^2 =1$ with $a>b>0$. In standard billiards, the only 2-periodic trajectories in the ellipse are the major and minor axes whose stability are hyperbolic and elliptic, respectively. In inverse magnetic billiards, the 2-periodic trajectories are aligned along and symmetric about the major and minor axes of the ellipse; that is, the centers of the Larmor circles lie on the coordinate axes of the ellipse. See figure \ref{2PeriodicEllipseFig}. By symmetry, the angles $\theta_0 = \theta_1 = \theta_3 = \theta_4$ and we are again in the case $\beta = \delta$. 

Consider the 2-periodic trajectory aligned along the major axis of the ellipse. This constrains the Larmor radius to $0 < \mu < b$. Two direct calculations show that 
\begin{equation}
\alpha = \frac{2a\sqrt{b^2-\mu^2}}{b\mu} \qquad \text{and} \qquad \cos(\theta_0) = \frac{a\mu}{\sqrt{b^4 + \mu^2(a^2-b^2)}}.
\end{equation}
As $\beta = 2\cot(\theta_0)$, this gives
\begin{equation}
\beta = \frac{2a\mu}{b\sqrt{b^2-\mu^2}}. 
\end{equation}
Combining these two equations gives us $\alpha$ in terms of $\beta$,
\begin{equation}
\alpha = \frac{a^2}{b^2}\frac{4}{\beta}.
\end{equation}
In terms of Theorem \ref{Period2Stability} and figure \ref{Period2StabilityDiagram}, this 2-periodic trajectory has $\alpha = \frac{a^2}{b^2}\cdot 2m$, and hence the trajectory is hyperbolic. 

In the case of the 2-periodic trajectory aligned along the minor axis of the ellipse, we now have $0 < \mu < a$. Repeating the above calculations yields 
\begin{equation}
\alpha = \frac{b^2}{a^2}\frac{4}{\beta}.
\end{equation}
In terms of Theorem \ref{Period2Stability} and figure \ref{Period2StabilityDiagram}, this 2-periodic trajectory has $\alpha = \frac{b^2}{a^2}\cdot 2m$, and hence the trajectory is elliptic except for when $2b^2=a^2$, in which case the trajectory is parabolic. It is noteworthy that this same obstruction to ellipticity occurs for standard billiards in the ellipse \cite{K2000}. We discuss this further in section \ref{EllipseDigression}.

\begin{figure}[thp]
\includegraphics[width=0.75\textwidth]{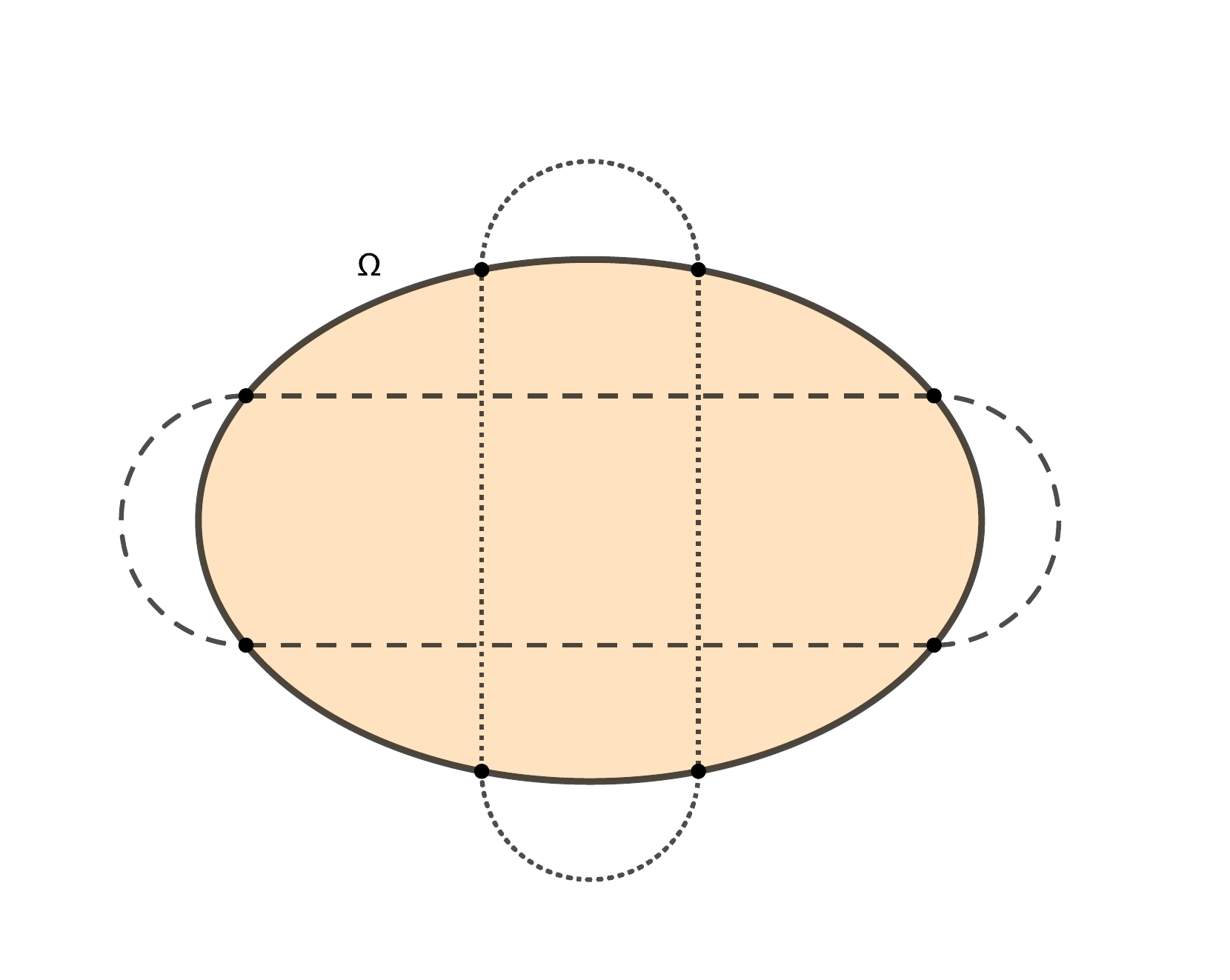}
\caption{2-periodic trajectories in the ellipse along the major axis (dashed) and minor axis (dotted). }
\label{2PeriodicEllipseFig}
\end{figure}

An interesting feature of the two families of 2-periodic trajectories is that the stability is only dependent upon the lengths of the semi-major and semi-minor axes of the ellipse. Moreover, the imposed restrictions on $\mu$ in both cases apply to the regime $\mu < \rho_{min}$ and to a subinterval of the $\rho_{min}< \mu < \rho_{max}$ regime. This adds to the limited knowledge of inverse magnetic billiards in the intermediate curvature regime. 

For comparison, consider standard billiards in the ellipse $\Omega$. The only 2-periodic billiard trajectories are the major and minor axes. A quick calculation shows that the major axis of the ellipse is hyperbolic while the minor axis is elliptic except when $a^2 = 2b^2$, in which case the trajectory is parabolic.
\label{2PeriodicEllipseExample}
\end{example}

\begin{example}
Consider the case when $\partial \Omega$ is the one-parameter family of curves $x^{2k}+y^{2k}=1$, $k \in \mathbb{N}$. As the case $k=1$ corresponds to the unit circle, we also assume $k>1$. By symmetry, we again have $\theta_0 = \theta_1 = \theta_2 = \theta_3$ and are again in the case $\beta = \delta$. 

The axes of symmetry of $\partial\Omega$ gives rise to two families of 2-periodic trajectories: the stadia are oriented along the horizontal or vertical axes of symmetry, and the stadia which are oriented along the diagonal axes of symmetry, $y=\pm x$. 

Consider first the trajectories that are oriented about the horizontal (and equivalently, vertical) axis. See figure \ref{Squarish}a. Geometrically, 2-periodic trajectories have Larmor radius $\mu$ limited to $0 < \mu < 1$. Two calculations show that
\begin{equation}
\alpha = \frac{2(1-\mu^{2k})^{\frac{1}{2k}}}{\mu} \qquad \text{and} \qquad \cos(\theta_0) = \left( (\mu^{-2k} -1)^{\frac{2k-1}{k}} + 1 \right)^{-1/2}. 
\end{equation}
Again, $\beta = 2\cot(\theta_0)$, and hence 
\begin{equation}
\beta = 2(\mu^{-2k}-1)^\frac{1-2k}{2k}.
\end{equation}
Combining these to get $\alpha$ in terms of $\beta$, we get
\begin{equation}
\alpha = \beta (\mu^{-2k}-1). 
\end{equation}
In terms of Theorem \ref{Period2Stability} and figure \ref{Period2StabilityDiagram}, we now must find the relative size of $\alpha$ with $m = 2/\beta$ and $2m$. Through some algebra, we see that $\alpha = m$ and $\alpha = 2m$ at the values
\begin{equation}
\mu^* = \left(2^\frac{k}{k-1}+1\right)^{-\frac{1}{2k}}  \qquad \text{and} \qquad \mu^{**} = 2^{-\frac{1}{2k}},
\end{equation}
respectively, and the stability can be summarized in the following way: 
\begin{itemize}
\item Parabolic if and only if $\mu = \mu^*$ or $\mu^{**}$;
\item Elliptic if and only if $\mu \in (0,\mu^*) \cup (\mu^*,\mu^{**})$;
\item Hyperbolic if and only if $\mu \in (\mu^{**},1)$.
\end{itemize}

\begin{figure}[thp]
\begin{tabular}{ c c } 
(a) \includegraphics[width=0.50\textwidth]{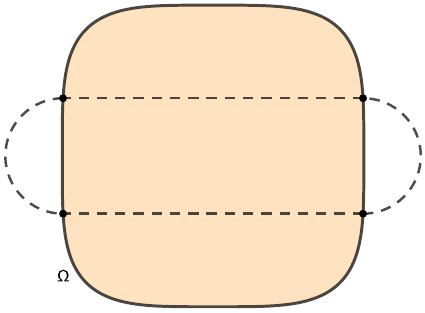} & (b) \includegraphics[width =0.40\textwidth]{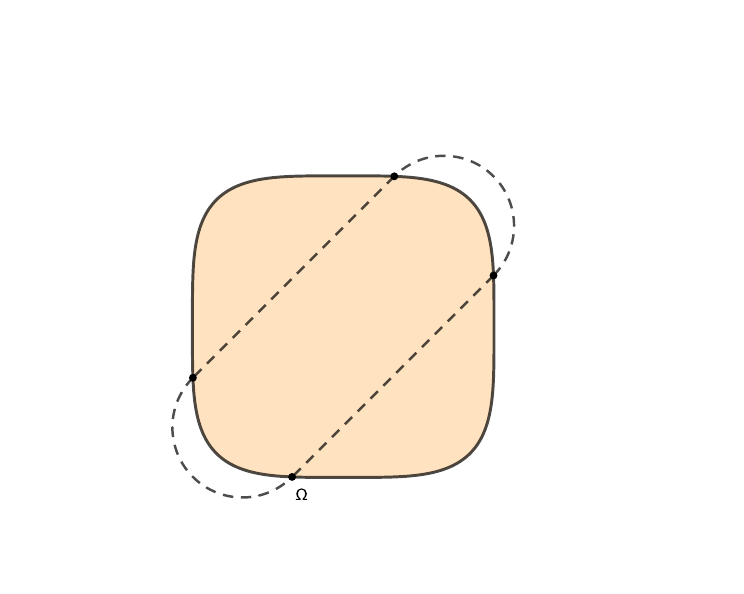} \\
\end{tabular}
\caption{2-periodic trajectories in the curve $x^{2k}+y^{2k}=1$ along the horizontal axis (a) and diagonal axis (b). }
\label{Squarish}
\end{figure}

Next, consider the case where the trajectory is oriented along the diagonal $y=x$, as in figure \ref{Squarish}b. The diagonal of $\partial \Omega$ is of length $2\cdot 2^\frac{k-1}{2k}$, so any Larmor circle must have diameter smaller than this diameter, meaning $0 < \mu < 2^\frac{k-1}{2k}$. 

We take a slightly different approach than previously. Suppose $P_0 = (x_0,y_0) = (x_0,(1-x^{2k})^{\frac{1}{2k}})$ is the point in figure \ref{Squarish}b with the largest value of $y_0$ (i.e. the point that is ``on top"). This means $x_0 \in (-2^{-\frac{1}{2k}},2^{-\frac{1}{2k}})$ and $y_0 > x_0$. In this construction,
\begin{equation}
\ell_1 = (x_0+y_0)\sqrt{2}, \qquad \mu = \frac{y_0-x_0}{\sqrt{2}}\qquad \implies \qquad \alpha = 2\frac{y_0+x_0}{y_0-x_0}.
\end{equation}
Using implicit differentiation, we find the angle $\theta_0$ can be written simply in terms of $x_0$, and $y_0$:
\begin{equation}
\tan(\theta_0) = \frac{y_0^{2k-1} + x_0^{2k-1}}{y_0^{2k-1} - x_0^{2k-1}}. 
\end{equation}
Just as in the previous examples, $\beta = 2\cot(\theta_0)$. Because $m = 2/\beta = \tan(\theta_0)$, the stability depends upon the relative values of $\alpha$ and $m =\tan(\theta_0)$ and $2m = 2\tan(\theta_0)$. Through factoring the above equation, we get
\begin{equation}
\alpha = 2m f_k(x_0,y_0)
\end{equation}
with
\begin{equation}
f_k(x_0,y_0) = \frac{y_0^{2k-2} + y_0^{2k-3}x_0+ \cdots + y_0x_0^{2k-3}+x_0^{2k-2}}{y_0^{2k-2} - y_0^{2k-3}x_0+ \cdots -y_0x_0^{2k-3}+x_0^{2k-2}}.  
\end{equation}
Thus determining the stability of the diagonal 2-periodic orbit reduces to determining the values of $f_k(x_0,y_0)$. Luckily $f_k(x_0,y_0)$ is a continuous, monotone increasing function on $(-2^{-\frac{1}{2k}},2^{-\frac{1}{2k}})$, has limiting values $1/(2k-1)$ and $2k-1$ as $x_0 \to -2^{-\frac{1}{2k}}$ and $x_0 \to 2^{-\frac{1}{2k}}$, respectively, and satisfies $f_k(0,1) = 1$. By the intermediate value theorem, there exists an $\tilde{x_0} \in (-2^{-\frac{1}{2k}},0)$ with corresponding $\tilde{y_0}$ such that $f_k(\tilde{x_0},\tilde{y_0}) = 1/2$, which in turn means $\alpha = m$. Moreover, $f_k(0,1)=1$ corresponds to $\mu = 2^{-1/2}$ and $f_k(\tilde{x_0},\tilde{y_0})=1/2$ corresponds to a value $\tilde{\mu} = (\tilde{y_0}-\tilde{x_0})/\sqrt{2}$ given by the equations above. 

Ultimately,  the stability can be summarized as follows: 
\begin{itemize}
\item Parabolic if and only if $\mu = 2^{-1/2}$ or $\tilde{\mu}$ if and only if $x_0 = 0$ or $\tilde{x_0}$;
\item Elliptic if and only if $\mu \in (0,2^{-1/2})$ if and only if $x_0 \in (0,2^{-\frac{1}{2k}})$;
\item Hyperbolic if and only if $\mu \in (2^{-1/2}, 2^\frac{k-1}{2k})$ if and only if $x_0 \in (-2^{-\frac{1}{2k}},0)\setminus\{\tilde{x_0}\}$.
\end{itemize} 

For comparison, we can apply Proposition \ref{BilliardLinStab} to this family of curves to study the linear stability of 2-periodic billiard trajectories. Such 2-periodic billiard trajectories are the horizontal and vertical axes of symmetry, and the diagonals, which are the widths and diameters of $\Omega$, respectively. 

Consider first a width of $\Omega$ as the horizontal segment connecting the points $(-1,0)$ and $(0,1)$. As the curvature of $\partial\Omega$ vanishes at these points, the radii of curvature are infinite. An analysis of the billiard stability matrix yields
\begin{equation} 
\Tr S_2 = 2 - 4l \left( \frac{1}{\rho_1} + \frac{1}{\rho_2} \right) + \frac{4l^2}{\rho_1\rho_2},
\end{equation}
and hence this trajectory is parabolic (that is, Proposition \ref{BilliardLinStab} does not directly apply at points of infinite radii of curvature). 

Consider next a diameter of $\Omega$ as the segment connecting the points $(-2^{-\frac{1}{2k}},-2^{-\frac{1}{2k}})$ and $(2^{-\frac{1}{2k}},2^{-\frac{1}{2k}})$. At these points the radii of curvature of $\partial\Omega$ are at their minimum, $\rho_1 = \rho_2 = 2^{\frac{1-k}{2k}}/(2k-1).$ Because $l = 2\cdot 2^\frac{k-1}{2k} = 2^{\frac{3k-1}{2k}}$, it is quick to show that $l > 2\rho_1$ for all $k>1$ and hence the diagonal 2-periodic billiard is hyperbolic. 
\end{example}

\subsection{2-Periodic Trajectories in a Non-Strictly Convex or Concave Set}
We again consider the case when $n=2$ and we suppose $\Omega$ is either not strictly convex (i.e. contains points of vanishing curvature) or concave (i.e. contains points of negative curvature). We address the same propositions as the convex case. 

\begin{prop}
Let $\Omega$ be convex or concave. If an inverse magnetic billiard trajectory is 2-periodic, then the angles $\chi_{0,2} = \chi_{2,4} = \pi/2$. 
\label{2PeriodicConcaveProp}
\end{prop}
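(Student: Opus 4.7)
The plan is to sidestep the curvature of $\partial\Omega$ entirely and argue purely from the rigid geometry of the two Larmor circles and their common chord-support tangent lines. I would write $L_1$ for the line through $P_0$ and $P_1$ (the support of the chord $\ell_1$) and $L_2$ for the line through $P_2$ and $P_3$ (the support of $\ell_3$), and let $O_1,O_2$ denote the centers of the full Larmor circles containing the arcs $\gamma_{0,2}$ and $\gamma_{2,4}$ respectively.

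First I would record the tangency and handedness data. Because the particle's velocity is continuous across $\partial\Omega$, the line $L_1$ is tangent to $\gamma_{0,2}$ at $P_1$ and to $\gamma_{2,4}$ at $P_4=P_0$, while $L_2$ is tangent to $\gamma_{0,2}$ at $P_2$ and to $\gamma_{2,4}$ at $P_3$. Since $eB<0$, both arcs are traversed anticlockwise, so each of $O_1,O_2$ lies at perpendicular distance $\mu$ to the \emph{left} of the velocity vector at every point of its arc. Consequently $O_1$ and $O_2$ both lie on a single line parallel to $L_1$ at signed distance $\mu$ (with feet $P_1$ and $P_0$), and both also lie on a single line parallel to $L_2$ at signed distance $\mu$ (with feet $P_2$ and $P_3$).

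Next I would derive the contradiction. Suppose $\chi_{0,2}\neq \pi/2$; then $L_1\not\parallel L_2$, so the two parallel loci above are non-parallel and meet in a single point, forcing $O_1=O_2$. Both Larmor arcs then lie on a common circle $\mathcal{C}$ of radius $\mu$, and $L_1$ is tangent to $\mathcal{C}$ at the two distinct points $P_0$ and $P_1$, which is impossible. Hence $\chi_{0,2}=\pi/2$; the identical argument with $\gamma_{0,2}$ and $\gamma_{2,4}$ interchanged (equivalently, by the relabeling $P_i\mapsto P_{i+2}$) yields $\chi_{2,4}=\pi/2$.

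The hard part will be verifying carefully that $O_1$ and $O_2$ lie on the \emph{same} side of each of $L_1$ and $L_2$, since this is the only place where the global orientation of the dynamics enters. It follows cleanly from the fixed sign of $eB$ and the anticlockwise handedness of every Larmor arc, without any appeal to convexity or concavity of $\Omega$, which is precisely why the argument extends beyond the strictly convex setting of Proposition \ref{2PeriodicConvexProp}.
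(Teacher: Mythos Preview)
Your argument is correct and is essentially the same as the paper's. The paper argues that if $\chi_{0,2}\neq\pi/2$ then the two tangent lines meet at a point $Q$, the second Larmor arc must be tangent to both lines and lie in the compact region they cut out with $\gamma_{0,2}$, and since both arcs have radius $\mu$ the two circles must coincide---a contradiction; your center-offset formulation (both $O_1$ and $O_2$ lie on the same $\mu$-translate of each tangent line, hence coincide when the lines are non-parallel) is just a cleaner way of expressing the same incidence, with the handedness step making explicit what the paper's ``lie within the compact set'' is implicitly using.
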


The proof of this is identical to the first part of the proof of Proposition \ref{2PeriodicConvexProp}. With the assumption that $\Omega$ can be convex or concave, this now allows $\theta_i$ to be $\geq \pi/2$. The equations for $DT$, $\Tr S_2$, $\alpha$, $\beta$, and $\delta$ are unchanged but now we have the possibility that $\beta$ and $\delta$ could be nonpositive. Again, a direct calculation proves the following. 

\begin{theorem}
Let $\alpha$, $\beta$, $\delta$ be defined as in equation \ref{alphabetadelta} and suppose $\Omega$ is not strictly convex or is concave. 
\begin{enumerate}[(i) ]
\item If $\beta = \delta = 0$, then the trajectory is parabolic for all $\alpha \in (0,\infty)$; 
\item If $\beta \leq 0$ and $\delta \leq 0$ and $\beta$ and $\delta$ are not both zero,
then the trajectory is hyperbolic for all $\alpha \in (0,\infty)$; 
\item If either
	\begin{enumerate}[a)]
	\item $\beta > 0$ and $\delta=0$; or
	\item $\beta > 0$ and $\delta < 0$, and $\displaystyle \frac{2}{\beta} +\frac{2}{\delta} \leq 0$,
	\end{enumerate}
then the trajectory is 
	\begin{itemize}
	\item parabolic if and only if $\alpha = \dfrac{2}{\beta}$;
	\item elliptic if and only if $\alpha \in \left(0,\dfrac{2}{\beta}\right)$;
	\item hyperbolic if and only if $\alpha \in \left(\dfrac{2}{\beta}, \infty\right)$; 
\end{itemize}
	
\item If $\beta >0$ and $\delta <0$ and $\dfrac{2}{\beta} + \dfrac{2}{\delta} >0$, then the trajectory is 
	\begin{itemize}
	\item parabolic if and only if $\alpha = \dfrac{2}{\beta}$ or $\alpha = \dfrac{2}{\beta} + \dfrac{2}{\delta}$;
	\item elliptic if and only if $\alpha \in \left( \dfrac{2}{\beta} + \dfrac{2}{\delta}, \dfrac{2}{\beta} \right)$;
	\item hyperbolic if and only if $\alpha \in \left(0, \dfrac{2}{\beta} + \dfrac{2}{\delta}\right) \cup \left(\dfrac{2}{\beta}, \infty \right)$; 
	\end{itemize}

\item If $\beta>0$ and $\delta>0$, then the stability is determined by the statement of Theorem \ref{Period2Stability}.
\end{enumerate}
Identical statements to (iii) and (iv) hold when $\beta$ and $\delta$ are switched. 
\label{Period2ConcaveStability}
\end{theorem}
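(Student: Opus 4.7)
The plan is to reduce the trichotomy $|\Tr S_2| < 2$, $=2$, $>2$ to two sign tests and then run a case analysis. The key observation is that direct algebra from \eqref{TrEqn} yields the factorizations
\begin{align*}
\Tr S_2 + 2 &= (\alpha\beta - 2)(\alpha\delta - 2), \\
\Tr S_2 - 2 &= \alpha\bigl[\alpha\beta\delta - 2(\beta+\delta)\bigr].
\end{align*}
The orbit is elliptic precisely when $\Tr S_2 + 2 > 0$ and $\Tr S_2 - 2 < 0$, parabolic when at least one of these vanishes, and hyperbolic when they share the same nonzero sign. Since $\alpha>0$ throughout, only the signs of $(\alpha\beta-2)(\alpha\delta-2)$ and of $\alpha\beta\delta - 2(\beta+\delta)$ need to be tracked as $\alpha$ ranges over $(0,\infty)$.

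With this reduction, I would dispatch the cases in order. Cases (i) and (ii) follow directly from \eqref{TrEqn}: when $\beta = \delta = 0$ the trace is identically $2$, while when $\beta,\delta \leq 0$ (not both zero) each of the terms $-2\alpha(\beta+\delta)$ and $\alpha^2\beta\delta$ is nonnegative with the first strictly positive, forcing $\Tr S_2 > 2$. Case (iii)(a) reduces $\Tr S_2$ to the linear expression $2 - 2\alpha\beta$, from which the three regimes follow by comparing $\alpha$ with $2/\beta$. For case (iii)(b), with $\beta > 0$, $\delta < 0$, and $2/\beta + 2/\delta \leq 0$, I would rewrite
\[
\Tr S_2 - 2 = \alpha\beta\delta\left(\alpha - \tfrac{2}{\beta} - \tfrac{2}{\delta}\right),
\]
which is strictly negative for all $\alpha > 0$ since $\beta\delta < 0$ and the bracket is positive. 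The stability is then controlled entirely by the sign of $\Tr S_2 + 2 = \beta\delta(\alpha - 2/\beta)(\alpha - 2/\delta)$, which, after cancelling the always-positive factor $\alpha - 2/\delta$ against $\beta\delta < 0$, reduces to the sign of $2/\beta - \alpha$.

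Case (iv) is the most involved: $\beta\delta < 0$ and $2/\beta + 2/\delta > 0$ force the two candidate parabolic values to be positive and strictly ordered $0 < 2/\beta + 2/\delta < 2/\beta$. Tabulating the signs of $\Tr S_2 - 2$ and $\Tr S_2 + 2$ on the three intervals $(0, 2/\beta + 2/\delta)$, $(2/\beta + 2/\delta,\, 2/\beta)$, and $(2/\beta, \infty)$ — with the single sign change of each occurring at its respective endpoint — produces the hyperbolic/parabolic/elliptic/parabolic/hyperbolic pattern claimed. Case (v) is Theorem~\ref{Period2Stability} verbatim, and the final assertion that the analogous statements hold when $\beta$ and $\delta$ are interchanged is manifest from the symmetry of \eqref{TrEqn} in those variables.

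The main obstacle is the bookkeeping in case (iv) (and to a lesser extent (iii)(b)): specifically, making sure the ordering of $2/\beta$, $2/\delta$, and $2/\beta+2/\delta$ on the positive real line is correctly established under each sign regime, and that endpoint behavior at the parabolic values is consistent so that the five sign intervals of $(0,\infty)$ are exhausted without overlap. The remaining work amounts to a routine sign chart on two polynomials in $\alpha$.
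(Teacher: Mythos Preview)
Your proposal is correct and is precisely the kind of ``direct calculation'' the paper invokes without writing out; the factorizations $\Tr S_2 + 2 = (\alpha\beta-2)(\alpha\delta-2)$ and $\Tr S_2 - 2 = \alpha\beta\delta\bigl(\alpha - \tfrac{2}{\beta} - \tfrac{2}{\delta}\bigr)$ are exactly the right tools to reduce the five cases to routine sign charts. The paper provides no further detail beyond asserting that a direct computation suffices, so your write-up in fact supplies more than the paper does.
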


As noted in the literature on the stability of 2-periodic billiard trajectories (e.g. \cite{KozlovTr, K2000, W1986}), the stability criteria therein allow for the cases where the boundary is convex or concave at the point of reflection. As in the strictly convex case, the stability criterion for inverse magnetic billiards is only dependent upon the angles $\theta_i$, and the ratio $\ell_1/\mu$. These criteria are again not dependent upon the curvature of the boundary at the exit or reentry points nor is it directly dependent on the magnetic field. 

\begin{example}
Suppose $\Omega$ is the stadium: a rectangle with side lengths $L$ and $2R$ capped by semicircles of radius $R$ on opposite sides (see figure \ref{2PeriodicTelephoneFig}a). If the points $P_0, P_1, P_2, P_3$ of a 2-periodic trajectory are all on the straight sides of $\partial\Omega$ with $P_0, P_3$ on one side and $P_1,P_2$ on the opposite side, then the angles $\theta_i = \pi/2$ for $i = 0,1,2,3$ and necessarily $2\mu < L$. Then $\beta =\delta =0$ and the trajectory is parabolic.   

If instead the trajectory has the points $P_0,$ $P_3$ on one semicircular cap and the points $P_1,P_2$ on the other semicircular cap of the stadium, then necessarily $\mu < R$ and $\beta = \delta >0$ as all angles $\theta_i$ are equal. We use the second part of Theorem \ref{Period2Stability} to analyze the stability. A direct calculation yields 
\begin{equation}
\ell_1 = L  + 2\sqrt{R^2-\mu^2}, \qquad \cos(\theta_i) = \frac{\mu}{R}, \qquad m = \frac{\sqrt{R^2-\mu^2}}{\mu}.
\end{equation}
It follows that $\alpha =L/\mu + 2m>2m$, and hence the trajectory is hyperbolic for all $\mu < R$. 
\end{example}

\begin{example}
Suppose $\Omega$ is the ``telephone curve" with the 2-periodic trajectory pictured in figure \ref{2PeriodicTelephoneFig}b. Suppose $P_0$ is the lower right point on $\partial\Omega$. Then $\theta_1=\theta_2 > \pi/2$ while $\theta_0=\theta_3 < \pi/2$, and so $\delta<0$ and $\beta >0$. Thus we are in either case (iii) or (iv) of Theorem \ref{Period2ConcaveStability}, depending upon the sign of $\frac{2}{\beta} + \frac{2}{\delta}$, which in turn equals $\tan(\theta_0) + \tan(\theta_1)$ in this setting. Knowing or calculating the values of $\ell_1$, $\mu$, $\theta_0,$ and $\theta_1$ will then determine the stability of the pictured trajectory. 
\end{example}

\begin{figure}[thp]
\begin{tabular}{ c c } 
(a) \includegraphics[width =0.5\textwidth]{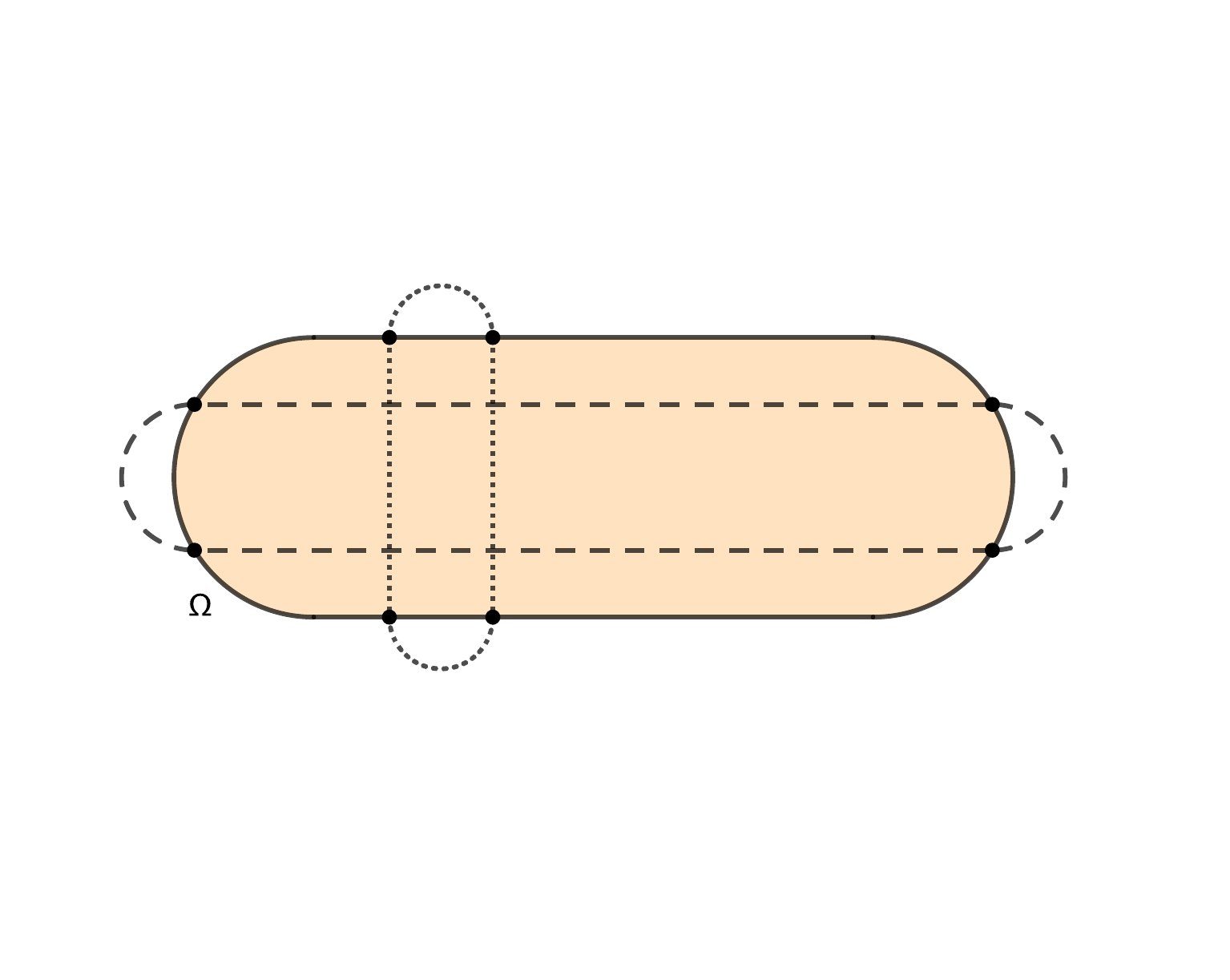} & (b) \includegraphics[width =0.40\textwidth]{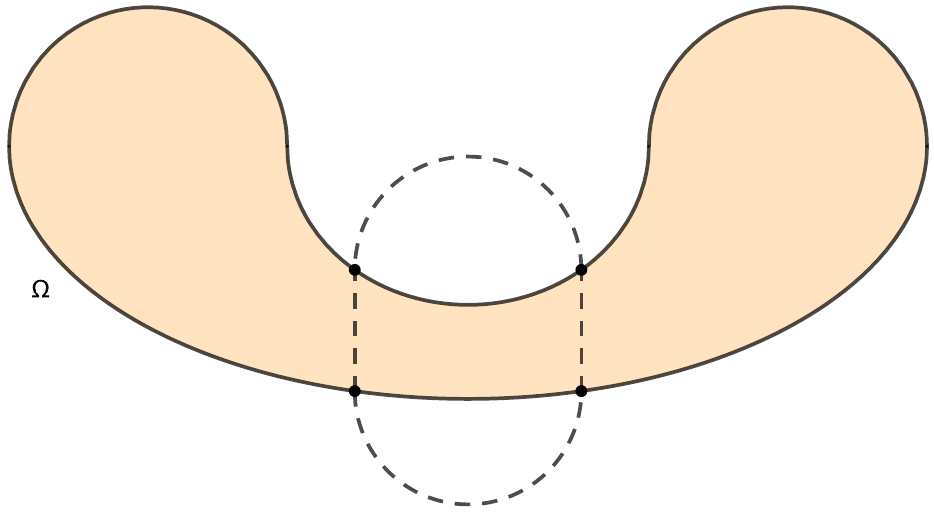} \\
\end{tabular}
\caption{(a) A parabolic (dotted) and hyperbolic (dashed) 2-periodic trajectory in the stadium ; (b) A 2-periodic trajectory in the concave ``telephone curve". }
\label{2PeriodicTelephoneFig}
\end{figure}

\subsection{A Digression on the Ellipse and Rotation Numbers}\label{EllipseDigression}

Example \ref{IMBEllipse} illustrates that the linear stability criterion for inverse magnetic billiards in an ellipse is the same as standard billiards in an ellipse: the major axis-aligned 2-periodic inverse magnetic billiard is hyperbolic and so is the standard billiard trajectory along the major axis; the minor axis-aligned 2-periodic inverse magnetic billiard trajectory is elliptic, as is the standard billiard trajectory along the minor axis, and both fail to be elliptic when $a^2 = 2b^2$. We investigate this exception further in the context of standard billiards. 

The standard billiard in an ellipse is a well-known example of an integrable system and is conjectured to be the only integrable planar billiard \cite{Bir}. Billiard trajectories in the ellipse have the following caustic property: if one segment of a trajectory is tangent to an ellipse confocal to the billiard table, then after every reflection the trajectory will remain tangent to the same confocal ellipse; if one segment (or its extension) of a trajectory is tangent to a hyperbola which is confocal to the billiard table, then every segment or its extension will be tangent to the same confocal hyperbola; and if a segment of the billiard trajectory passes through one focus, then after reflection the trajectory will pass through the other focus. The confocal ellipse or hyperbola with the aforementioned tangency property are called \emph{caustics} of the billiard trajectory. If we write the ellipse $\mathcal{E}: x^2/a^2+y^2/b^2=1$ for $a>b>0$, then the confocal family of $\mathcal{E}$ is given by 
\begin{equation}
\mathcal{E}_\lambda: \frac{x^2}{a^2-\lambda} + \frac{y^2}{b^2-\lambda} = 1
\end{equation}
for $\lambda \in \R{}$. The caustics are members of the confocal family which are ellipses for $\lambda \in (0,b^2)$ and hyperbolas for $\lambda \in (b^2,a^2)$. The caustics are degenerate and are contained in the major and minor axes for $\lambda = b^2$ and $a^2$, respectively. If $\lambda <0$ or $\lambda > a^2$, then the curves $\mathcal{E}_\lambda$ are outside $\mathcal{E}$ or imaginary, respectively. 

Given a periodic billiard trajectory in the ellipse $\mathcal{E}$, define the following \emph{rotation function } as the ratio of elliptic integrals
\begin{equation}
\rot: \left(0, b^2\right) \cup \left(b^2, a^2\right) \to \R{}, \qquad \rot(\lambda) = \frac{\displaystyle \int_0^{\min(b^2,\lambda)} \frac{dt}{\sqrt{(\lambda - t)(b^2 - \lambda)(a^2 - \lambda)}}}{\displaystyle 2\int_{\max(b^2,\lambda)}^{a^2}\frac{dt}{\sqrt{(\lambda - t)(b^2 - \lambda)(a^2 - \lambda)}}}.
\end{equation}
See e.g. \cite{CRR2012,DR2019} for a modern treatment, though these elliptic integrals were known to Jacobi and other mathematicians of the 19th century. 

A periodic trajectory will have $\rot(\lambda) = m/n \in \Q$ for coprime integers $m \geq n$. When $\mathcal{E}_\lambda$ is an ellipse, $m$ is the winding number and $n$ is the minimal period of the trajectory. 
In particular, $\rot(\lambda)$ increases monotonically from 0 to 1 as $\lambda$ increases monotonically from 0 to $b^2$. If $\mathcal{E}_\lambda$ is a hyperbola, then $m$ is the number of times the trajectory crosses the $y$-axis and $n$ is the minimal period. This necessarily implies the trajectory must have even period when the caustic is a hyperbola. As $\lambda$ increases monotonically from $b^2$ to $a^2$, $\rot(\lambda)$ decreases monotonically from 1 to $r>0$, where $r$ can be calculated explicitly, as we will see below. Proving the (decreasing) monotonicity of $\rot(\lambda)$ when the caustic is a hyperbola is nontrivial and was proved in \cite{D}. A simpler proof was given recently \cite{DR2019} using a different technique. 

To determine the limiting value of $\rot(\lambda)$ for a hyperbolic caustic, $r>0$, consider the approach of \cite{D}, where the confocal family has fixed foci at $(\pm1,0)$ and can be written in the form: 
\begin{equation}
\mathcal{C}_{\nu}: \frac{x^2}{\nu} + \frac{y^2}{\nu-1}=1. 
\end{equation}
The confocal curves are hyperbolas for $\nu \in (0,1)$ and ellipses for $\nu \in (1,\infty)$. Any ellipse of the form $\mathcal{E}: x^2/a^2 + y^2/b^2=1$ can be homothetically scaled to a member of the confocal family $\mathcal{C}_{\nu_0}$ for some $\nu_0>1$. Suppose the billiard boundary is the member of this confocal family $\mathcal{C}_\nu$ corresponding to $\nu = \nu_0>1$.  Then the limiting rotation number for a hyperbolic caustic is only dependent upon $\nu_0$ and is given by 
\begin{equation}
r = \displaystyle \lim_{\lambda \nearrow a^2} \rot(\lambda) = \frac{1}{\pi} \arccos\left( 1 - \frac{2}{\nu_0} \right). 
\end{equation}
See Section 11.2 and specifically subsection 11.2.3.5 of \cite{D} for details. 

In the case of the isolated parabolic billiard trajectory along the minor axis of the ellipse, the ellipse $\mathcal{E}$ has parameters $a$ and $b$ satisfying $a^2 = 2b^2$. The confocal family $\mathcal{C}_\nu$ has $a^2 = \nu_0$ and $b^2 = \nu_0-1$, so that together these three equations imply that the billiard table $\mathcal{C}_{\nu_0}$ has $\nu_0 = 2$. Moreover, this implies that $r = 1/2$. The conclusion we draw from this is the isolated parabolic billiard trajectory in the minor axis of the ellipse has rotation number $1/2$, and that as $\lambda$ increases from $b^2$ to $a^2$, the rotation function $\rot(\lambda)$ decreases monotonically from $1$ to $1/2$. That is, the isolated parabolic billiard trajectory along the minor axis of the ellipse is characterized by being the limiting trajectory of the unique elliptical billiard table whose trajectories with hyperbolic caustics have limiting rotation number $1/2$.

\section{Linear Stability of 3- and 4-Periodic Trajectories} 
\label{LinStab34}

\subsection{3-Periodic Trajectories} The study of 3-periodic trajectories can be performed in a similar fashion. However, we now have two distinct types of 3-periodic trajectories: those with rotation number $1/3$ and those with rotation number $2/3$. For an arbitrary 3-periodic trajectory, $\Tr S_3$ is cubic in $1/\mu$ with coefficients that are expressions in terms of $\ell_1$, $\ell_3$, $\ell_5$, and sines and cosines of combinations of $\chi_{0,2}$, $\chi_{2,4}$, $\chi_{4,6}$, and $\theta_i$ for $i = 0,\ldots, 5$. As in the 2-periodic case, no curvature terms $\kappa_i$ appear. 

\begin{figure}[thp]
\includegraphics[width=0.75\textwidth]{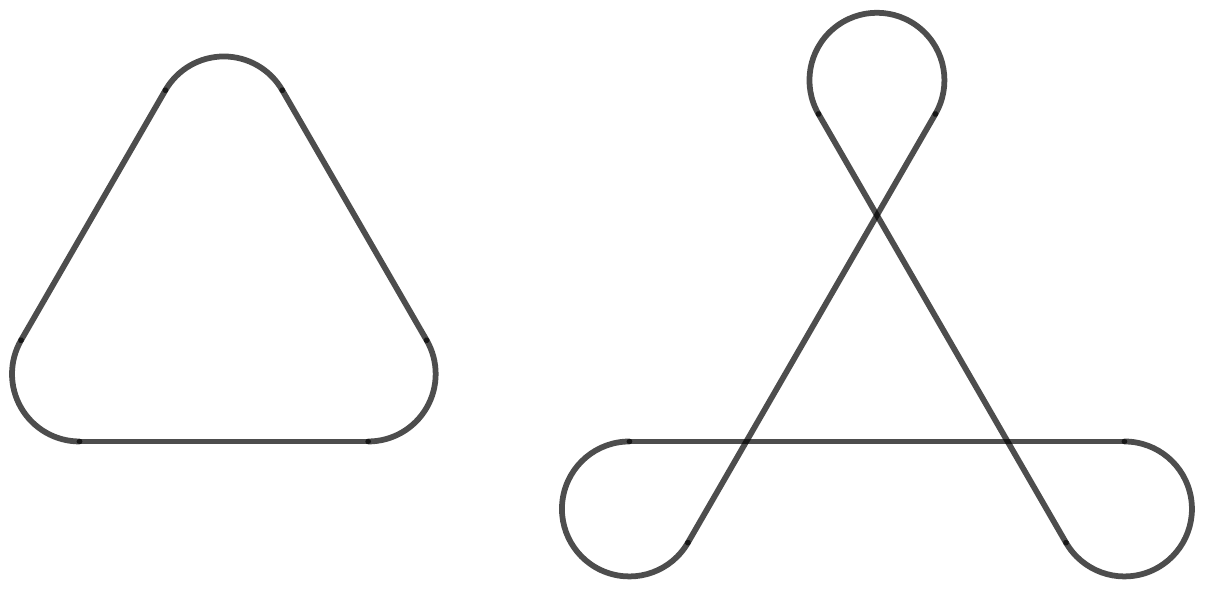}
\caption{The shape of 3-periodic trajectories with dihedral symmetry with rotation number $1/3$ (left) and $2/3$ (right). }
\label{Period3Shapes}
\end{figure}

To make the 3-periodic trajectories simpler to study, consider 3-periodic trajectories with dihedral symmetry. See figure \ref{Period3Shapes}. This assumption means $\ell_1 = \ell_3 = \ell_5 =: \ell$ and $\chi_{0,2} = \chi_{2,4} = \chi_{4,6} =: \chi$ are both constant with the latter equal to $\pi/3$ or $2\pi/3$ when the trajectory has rotation number $1/3$ or $2/3$, respectively. Then $\Tr S_3$ is cubic in $\alpha := \ell/\mu$ and has coefficients which can be written in terms of sums, products, and differences of cotangents of the $\theta_i$. For example, the constant term of $\Tr S_3$ can be written as 
\begin{align}
2 - \frac{3}{4} C_{23}C_{45} -\frac{3}{8} C_{01}(2C_{2345} \mp \sqrt{3}C_{23}C_{45})
\end{align}
where $C_{A} = \sum_{i \in A} \cot(\theta_i)$, the minus sign corresponds to the trajectory with rotation number $1/3$, and the plus sign corresponds to the trajectory with rotation number $2/3$. The cubic coefficient can be written as 
\begin{align}
-\mp \frac{\cos(\frac{\pi}{6} \mp (\theta_1 + \theta_2)) \cos(\frac{\pi}{6} \mp (\theta_3 + \theta_4)) \cos(\frac{\pi}{6} \mp (\theta_0 + \theta_5))}{\sin(\theta_0)\sin(\theta_1) \sin(\theta_2) \sin(\theta_3)\sin(\theta_4)\sin(\theta_5)}
\end{align}
with the same sign convention stated above applying to all $\mp$ terms. 

Ultimately, the linear stability of 3-periodic trajectories is difficult to analyze in general. 

\begin{example}
Consider the case when all $\theta_i=:\theta$ are equal. Then 
\begin{align*}
\Tr S_3 (s,\theta) &= 2-9\cot^2(\theta) - 3\sqrt{3}\cot^3(\theta)  \\
&\;\;\;\;\; + \alpha \left[\frac{3 \cos(\theta)}{4\sin^4(\theta)} \left(5\sqrt{3} \cos(\theta) + \sqrt{3}\cos(3\theta) - 3\sin(\theta) + 9\sin(3\theta) \right) \right] \\
&\;\;\;\;\; + \alpha^2 \left[ -\frac{3\sin(\frac{\pi}{3}+2\theta)(\cos(\theta)+\sin(\frac{\pi}{6}+3\theta))}{\sin^5(\theta)}\right] \\
&\;\;\;\;\; + \alpha^3 \left[ \frac{\cos^3(\frac{\pi}{6}-2\theta)}{\sin^6(\theta)} \right]
\end{align*}
and 
\begin{align*}
\Tr S_3 (s,\theta) &= 2-9\cot^2(\theta) + 3\sqrt{3}\cot^3(\theta) \\
&\;\;\;\;\; +\alpha \left[\frac{3 \cos(\theta)}{4\sin^4(\theta)} \left(-5\sqrt{3} \cos(\theta) - \sqrt{3}\cos(3\theta) - 3\sin(\theta) + 9\sin(3\theta) \right) \right] \\
&\;\;\;\;\; + \alpha^2 \left[ \frac{3\sin(\frac{\pi}{3}-2\theta)(\cos(\theta)+\sin(\frac{\pi}{6}-3\theta))}{\sin^5(\theta)}\right]  \\
&\;\;\;\;\; + \alpha^3 \left[ -\frac{\cos^3(\frac{\pi}{6}+2\theta)}{\sin^6(\theta)} \right] \\
\end{align*}
when the trajectory has rotation number $1/3$ and $2/3$, respectively. Once $\theta$ and $\ell$ are known, the value of $|\Tr S_3|$ can be computed to determine the stability. 
\end{example}

\begin{example}
Consider the case when $\partial\Omega$ is the circle of radius $R$. By symmetry, $\theta_i$ is a constant of motion and we are in the same situation as the previous example. First, suppose the trajectory has a rotation number $1/3$ and hence $\chi = \pi/3$. Then for each $\mu < R$, there is a unique $\theta \in (0,\pi/3)$ such that $(s,\theta)$ is the initial condition for the 3-periodic trajectory. In fact, we can say that this $\theta$ satisfies the equation 
\begin{equation}
\sin\left(\frac{\pi}{3}-\theta\right) = \frac{\mu \sin(\theta)}{\sqrt{R^2+\mu^2 - 2R\mu\cos(\theta)}}
\end{equation}
(see \cite{G2021}) which has functional solution
\begin{equation}
\cos(\theta) = \frac{3\mu + \sqrt{4R^2-3\mu^2}}{4R}.
\end{equation} 
In the case of the circle, $\ell = 2R\sin(\theta)$, which turns equation for $\Tr S_3$ into an equation in terms of $R$ and $\mu$. In particular, the equation $|\Tr S_3|=2$ is satisfied for all $\mu <R$, so the trajectory is parabolic. 

Next, consider the case when the rotation number is $2/3$ and so $\chi = 2\pi/3$. Again, for each $\mu < R$ there exists a unique $\theta \in (\pi/3,2\pi/3)$ such that $(s,\theta)$ is the initial condition for a 3-periodic trajectory. Just as before, this value of $\theta$ satisfies
\begin{equation}
\sin\left(\frac{\pi}{3}-\theta\right) = \frac{\mu \sin(\theta)}{\sqrt{R^2+\mu^2 - 2R\mu\cos(\theta)}}
\end{equation}
and has functional solution
\begin{equation}
\cos(\theta) = \frac{3\mu - \sqrt{4R^2-3\mu^2}}{4R}.
\end{equation} 
Evaluating the expression for $\Tr S_3$ for this value of $\theta$ yields $|\Tr S_3|=2$ for all values of $\mu <R$. Therefore both types of 3-periodic trajectories in the circle are parabolic. 
\end{example}

\subsection{4-Periodic Trajectories} 
The study of 4-periodic trajectories can be performed in a similar fashion. Again, we have two distinct types of 4-periodic trajectories: those with rotation number $1/4$ and those with rotation number $3/4$. For an arbitrary 4-periodic trajectory, $\Tr S_4$ is quartic in $1/\mu$ with coefficients that are expressions in terms of $\ell_i$ for $i \in \{1,3,5,7\}$ and sines and cosines of combinations of $\chi_{0,2}$, $\chi_{2,4}$, $\chi_{4,6}$, $\chi_{6,8}$, and $\theta_i$ for $i = 0,\ldots, 7$. As in the 2-periodic case, no curvature terms $\kappa_i$ appear. 

Again, we examine examples which incorporate symmetry to simplify the quartic expression for $\Tr S_4$. 

\begin{example}
Consider the case when $\partial\Omega$ is the circle of radius $R$. By symmetry, $\theta_i$ is a constant of motion and $\ell_1 = \ell_3 = \ell_5 = \ell_7 =: \ell$ and $\chi_{0,2} = \chi_{2,4}= \chi_{4,6}= \chi_{6,8}=:\chi$. First, suppose the trajectory has a rotation number $1/4$ and hence $\chi = \pi/4$. Then for each $\mu < R$, there is a unique $\theta \in (0,\pi/4)$ such that $(s,\theta)$ is the initial condition for the 4-periodic trajectory. In fact, we can say that this $\theta$ satisfies the equation 
\begin{equation}
\sin\left(\frac{\pi}{4}-\theta\right) = \frac{\mu \sin(\theta)}{\sqrt{R^2+\mu^2 - 2R\mu\cos(\theta)}}
\end{equation}
(see \cite{G2021}) which has functional solution
\begin{equation}
\cos(\theta) = \frac{R^2 + \mu^2 -\mu^2\csc^2(\frac{\pi}{4}-\theta)\sin^2(\theta)}{2R\mu}.
\end{equation} 
In the case of the circle, $\ell = 2R\sin(\theta)$, which turns equation for $\Tr S_4$ into an equation in terms of $R$ and $\mu$. Letting $\alpha = R/\mu$, the equation becomes
\begin{equation}
\begin{split}
\Tr S_4 &= \frac{2(1- 10\cos^2(\theta) + 17 \cos^4(\theta))}{\sin^4(\theta)} + \alpha \left[ \frac{32\cos(2\theta)(3\cos^2(\theta)-1)}{\sin^4(\theta)}\right] \\	
	&\;\;\;\;\; + \alpha^2 \left[ \frac{8 \cos^2(2\theta)(5+7\cos(2\theta))}{\sin^4(\theta)} \right]  - \alpha^3 \left[ \frac{64\cos^3(2\theta)\cos(\theta)}{\sin^4(\theta)}\right] \\
	&\;\;\;\;\; + \alpha^4 \left[ \frac{16 \cos^4(2\theta)}{\sin^4(\theta)} \right].
\end{split}
\end{equation}
For the specific value of $\theta$ which produces the 4-periodic trajectory with rotation number $1/4$, this expression for $\Tr S_4$ simplifies to $2$, and hence the trajectory is parabolic. 

An analogous calculation can be done to show the 4-periodic trajectory with rotation number $3/4$ is parabolic. 
\end{example}

\begin{example}
Consider the case when $\partial\Omega$ is the ellipse $x^2/a^2 + y^2/b^2 =1$ with $a>b>0$. Further, suppose the 4-periodic trajectories share the same horizontal and vertical axes of symmetry as the ellipse (see figure \ref{4PeriodicEllipse}). In particular, this symmetry assumption means that $\theta_0 = \theta_1 = \theta_4 = \theta_5$, $\theta_2 = \theta_3 = \theta_6 = \theta_7$, $\ell_1 = \ell_5$, $\ell_3 = \ell_7$, and $\chi_{0,2} = \chi_{2,4}= \chi_{4,6}= \chi_{6,8}=:\chi$, which in turn implies $\ell_2 = \ell_4 = \ell_6 = \ell_8$. Suppose the Cartesian coordinates $(x_0,-y_0)$ are the initial point $P_0$ in the fourth quadrant of a 4-periodic trajectory. 

\begin{figure}[thp]
\includegraphics[width = 0.75\textwidth]{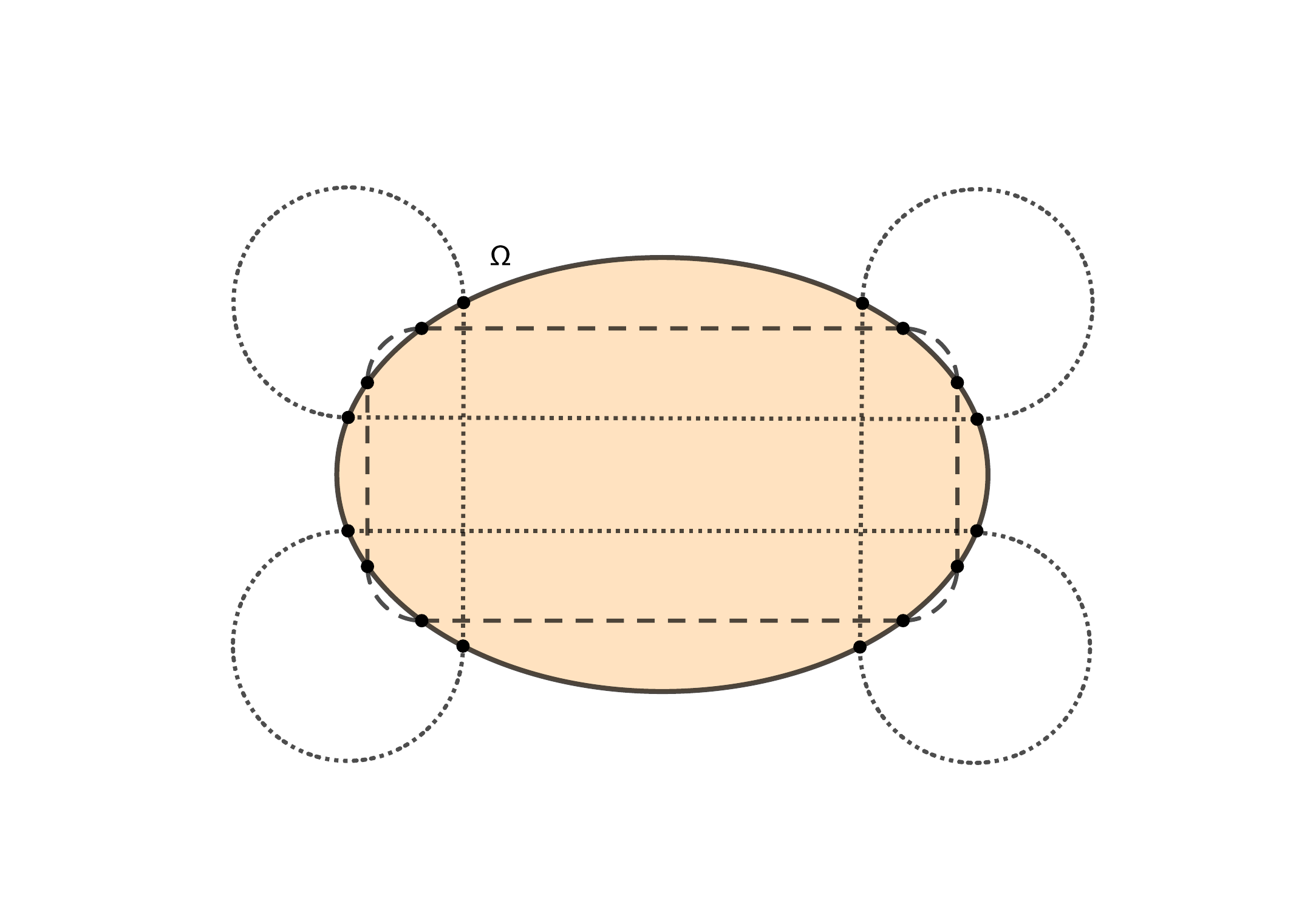}
\caption{Symmetric 4-periodic trajectories in the ellipse with rotation number $1/4$ (dashed) and $3/4$ (dotted), each with different Larmor radii.}
\label{4PeriodicEllipse}
\end{figure}

Consider first the case when the trajectory has rotation number $1/4$, and hence $\chi = \pi/4$. We aim to express the stability of the trajectory in terms of the Cartesian coordinates of $P_0$. Clearly $P_1 = (x_0,y_0)$, and the point $P_2$ will be the intersection of the ellipse with the line $y=-x +x_0 + y_0$ other than the point $P_1$, which is the same as the second intersection point of the Larmor circle. Further, this means the only possible initial $x$-coordinates are $x_0 \in (a(a^2-b^2)/(a^2+b^2),a)$ -- otherwise the quarter Larmor circle cannot intersect the ellipse to create the symmetric 4-periodic trajectory. Since $\chi=\pi/4$, we can write $P_2 = (x_2,y_2) = (x_0 - \mu, y_0 + \mu)$. A few calculations yield the following:

\begin{equation}
\begin{split}
\ell_1 &= 2y_0 \\
x_2 &= \frac{a^2(x_0+y_0) - ab \sqrt{a^2+b^2 - (x_0+y_0)^2}}{a^2+b^2} \\
y_2 &= \frac{b^2(x_0+y_0) + ab \sqrt{a^2+b^2 - (x_0+y_0)^2}}{a^2+b^2} \\
\mu &= \frac{2ab \sqrt{a^2 + b^2 - (x_0 + y_0)^2}}{a^2+b^2} = \pm \frac{2(b^2x_0 - a^2y_0)}{a^2+b^2} \\
\ell_3 &= 2x_2 \\
\cos(\theta_0) &= \frac{b^2x_0}{\sqrt{a^4y_0^2 + b^4 x_0^2}} \\
\cos(\theta_2) &= \frac{a^2y_2}{\sqrt{a^4y_2^2 + b^4 x_2^2}} = \frac{a^2(y_0+\mu)}{\sqrt{a^4(y_0+\mu)^2 + b^4 (x_0-\mu)^2}} \\
\end{split}
\label{4PeriodicEquations}
\end{equation} 
where the second equation for $\mu$ uses plus if $\chi = \pi/4$ and minus if $\chi = 3\pi/4$. 

Plugging in each of the above equations into $\Tr S_4$ yields a rational function in $x_0$ and $y_0$ with coefficients in $a$ and $b$:

\begin{equation}
\begin{split}
\Tr S_4 &= \left[  \right. 16 b^4 \left(a^2 - b^2\right)^4 x_0^4 -16 b^2 \left(a^2 - b^2\right)^3 \left(2 a^4 - 3 a^2 b^2 + 2 b^4\right) x_0^3 y_0 \\
	&\;\;\;\;\; 2 \left(a^2 - b^2\right)^2 \left(8 a^8 - 40 a^6 b^2 + 49 a^4 b^4 - 40 a^2 b^6 + 
   8 b^8\right) x_0^2 y_0^2 \\
	&\;\;\;\;\; 8 a^2 \left(a^2 - b^2\right) \left(4 a^8 - 10 a^6 b^2 + 13 a^4 b^4 - 10 a^2 b^6 + 
   4 b^8\right) x_0 y_0^3 \\
	&\;\;\;\;\; 8 a^4 \left(2 a^8 - 4 a^6 b^2 + 5 a^4 b^4 - 4 a^2 b^6 + 2 b^8\right) y_0^4 \left. \right] / \left[ a^4 b^4 y_0^2 \left(b^2 x_0-a^2 \left(x_0+2 y_0\right)\right)^2\right].
\end{split}
\label{TrS4}
\end{equation}

As $y_0$ can be written as a function of $x_0$, we can then express the stability of the 4-periodic trajectory in terms of the coordinate $x_0$. 

Repeating the above calculations in the case with rotation number $3/4$ and $\chi = 3\pi/4$ yields nearly the same equations as in (\ref{4PeriodicEquations}) and ultimately yields the same equation for $\Tr S_4$ given above in (\ref{TrS4}). See Remark \ref{4PeriodicRemark} below for additional commentary on this case. 

To further illustrate the stability analysis, consider the specific case when $a=3$ and $b=2$. Then $|\Tr S_4| = 2$ at the values 
\begin{equation}
x_0^* = \frac{291}{9\sqrt{13}}, \qquad x_0^{**} = \sqrt{\frac{88731 + 1575\sqrt{217}}{14534}}, \qquad x_0^{***} = \frac{291}{13\sqrt{61}}.
\end{equation}
Analyzing $|\Tr S_4|$ yields that the 4-periodic trajectory is 
\begin{itemize}
\item elliptic for $x_0 \in (x_0^*,x_0^{**})\cup (x_0^{**},x_0^{***})$;
\item parabolic for $x_0 \in \{x_0^*,x_0^{**},x_0^{***}\}$; and 
\item hyperbolic for $x_0 \in (15/13,x_0^*) \cup (x_0^{***},3).$
\end{itemize}

\begin{remark}
The symmetric 4-periodic trajectories in the ellipse with rotation number $1/4$ and $3/4$ are related to one another in the following way. Consider the eight ordered points $P_0, P_1, \ldots,P_7$ on $\partial\Omega$ which determine a trajectory with rotation number $1/4$. Then a trajectory in the order $P_0, P_5, P_6, P_3, P_4, P_1, P_2, P_7$ is a symmetric 4-periodic trajectory with rotation number $3/4$. See figure \ref{4PeriodicEllipseDuality}. This is partly due to the fact that for each segment $\ell_{2j}$, there are two supplementary values of $\chi$ which realize this segment. This in turn creates two Larmor arcs whose union (after a suitable reflection across $\ell_{2j}$) is a complete Larmor circle. As such, these two 4-periodic trajectories are complementary to one another. 
\label{4PeriodicRemark}
\end{remark}

\begin{figure}[thp]
\includegraphics[width = 0.75\textwidth]{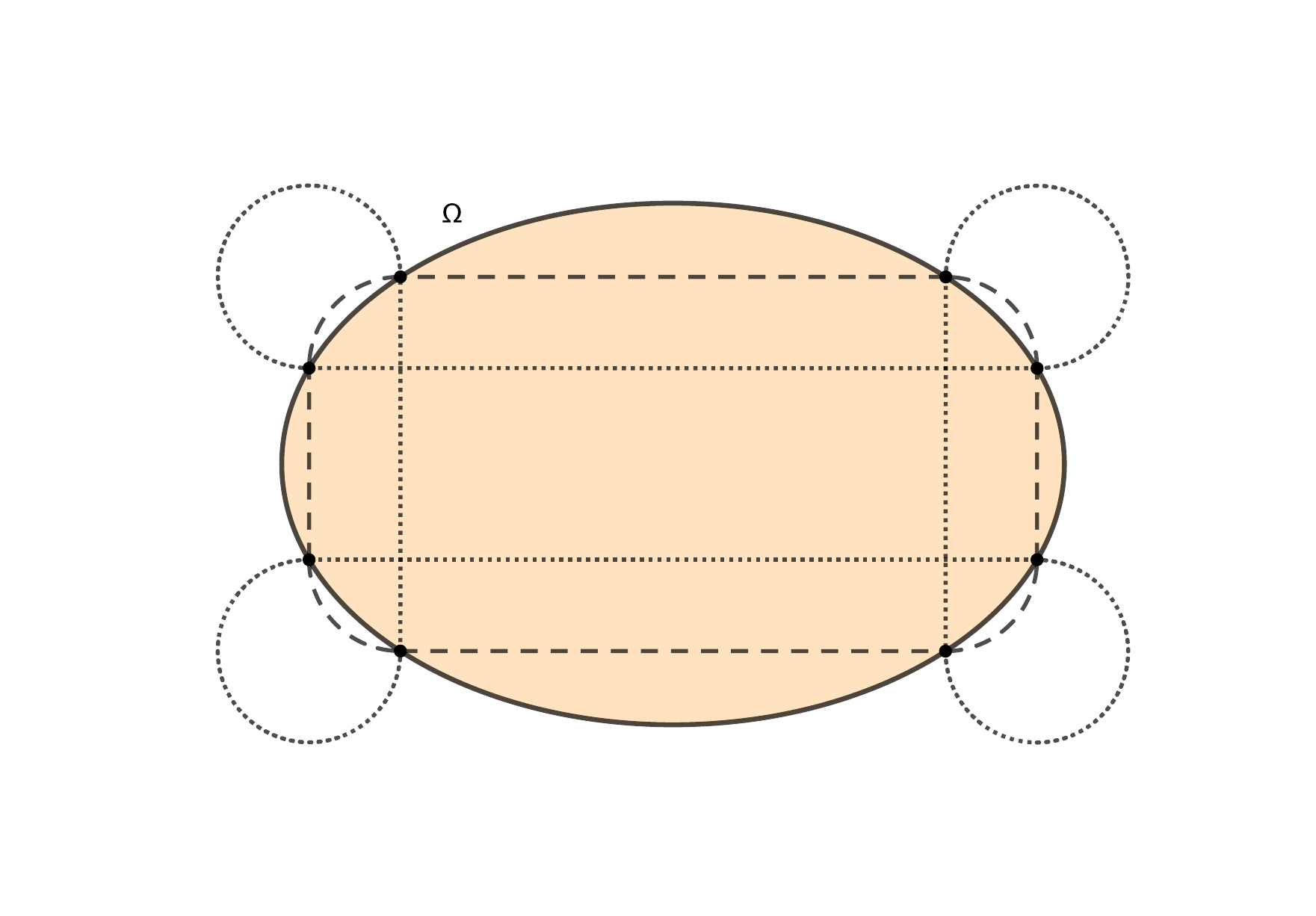}
\caption{Symmetric 4-periodic trajectories in the ellipse with rotation number $1/4$ (dashed) and $3/4$ (dotted), each with the same Larmor radii and same points $P_i$. }
\label{4PeriodicEllipseDuality}
\end{figure}

\begin{remark}
There is a second family of symmetric 4-periodic trajectories in the ellipse: those whose Larmor circles are symmetric about the axes of symmetry of the ellipse. The stability of these trajectories can be analyzed in a similar fashion to those above, though we do not address this example here.  
\end{remark}
\end{example}

\begin{example}
Consider the case when $\partial \Omega$ is the one-parameter family of curves $x^{2k}+y^{2k}=1$, $k \in \mathbb{N}$. As the case $k=1$ corresponds to the unit circle, we also assume $k>1$. By the dihedral symmetry of $\partial \Omega$, the 4-periodic trajectories have constant $\theta_i :=\theta$, $\ell_{2j}$, $\ell_{2j+1}$, and $\chi_{2l,2l+2} =:\chi$. In addition to the two families of trajectories determined by the rotation number, we can also consider two further families based upon whether the centers of the Larmor circles are on the diagonal axes of symmetry of $\partial \Omega$ or on the horizontal and vertical axes of symmetry of $\partial\Omega$.

First, consider the case when the rotation number is $1/4$ and the centers of the Larmor circles are on the diagonal axes of symmetry of $\partial\Omega$. Suppose the point $P_0 = (x_0,-y_0) \in \partial\Omega$ is in the fourth quadrant and the trajectory has initial velocity in the direction $(0,1)$ so that $P_1 = (x_0,y_0) \in \partial\Omega$ is in the first quadrant. To produce the 4-periodic trajectory, it must be the case that $x_0 \in (2^{-\frac{1}{2k}},1)$ and $P_2 = (y_0,x_0)$. Further, we know $\chi = \pi/4$, which implies the center of the first Larmor arc is at $(y_0,y_0)$ and hence $\mu = x_0 - y_0$. 

Due to the curvature of $\partial\Omega$, it is possible for the Larmor arc of this first segment of the trajectory to intersect $\partial\Omega$ in up to four different locations. To ensure the point $P_2$ is the proper point of intersection of the Larmor circle with $\partial\Omega$, there must be exactly two intersection points between the Larmor circle and $\partial\Omega$ so that the billiard can ``round the corner" of $\partial\Omega$. That is, there exists $\hat{x} \in (2^{-\frac{1}{2k}},1)$ such that 
\begin{equation}
(x-y_0)^2+(y-y_0)^2 = \mu^2 \qquad \text{and} \qquad x^{2k}+y^{2k}=1
\label{IntersectionEquations}
\end{equation}
has exactly three real roots in $x$: $x=\hat{x}$, $x=2^{-\frac{1}{2k}}$, and $x=y_0$, with $y_0 < 2^{-\frac{1}{2k}}< \hat{x}$. Alternately, we can say that $\hat{x}$ is the value of $x_0$ such that the distance from the center of the Larmor circle, $(y_0,y_0)$ and $(2^{-\frac{1}{2k}},2^{-\frac{1}{2k}})$ is exactly $\mu$. For this value $\hat{x}$, these two equations (\ref{IntersectionEquations}) have exactly two real roots for $x_0 \in (2^{-\frac{1}{2k}},\hat{x})$, three real roots when $x_0 = \hat{x}$, and four real roots when $x_0 \in (\hat{x},1)$. This behavior is analogous to a circle whose center lies on the diagonal of a square and can have multiple intersections with the sides and corners of the square. We now assume $x_0 \in (2^{-\frac{1}{2k}},\hat{x})$. 

We can now compute several of the relevant quantities that appear in $\Tr S_4$ :
\begin{equation}
\begin{split}
\ell_1 &= 2y_0 \\
\mu &= x_0-y_0 \\
\ell_2 &= 2\mu \\
\tan(\theta) &= \frac{y_0^{2k-1}}{x_0^{2k-1}}. \\
\end{split}
\label{4PeriodicEquationsAgain}
\end{equation} 
These quantities produce the following expression for $\Tr S_4$ in terms of $x_0$ and $y_0$:
\begin{align}
\begin{split}
\Tr S_4 &= 2 + \left[ 16x_0^2 \left(x_0^{2k-2} -y_0^{2k-2}\right)\left(x_0^{4k-2} - y_0^{4k-2}\right) \left(x_0^{2k-2} - y_0^{2k-2} + 2x_0^{-2}y_0^{2k-1}\right)  \right. \\
	&\;\;\;\;\; \cdot \left. \left(y_0^{4k-2}-x_0^{4k-2} + (x_0-y_0)x_0^{2k-1}y_0^{2k-2}\right)^2  \right] / \left[ y_0^{16k-12}(x_0-y_0)^4\right]
\end{split}
\label{4PerTrace}
\end{align}
By assumption, $0 < y_0 < x_0 <1$, and each term in the above rational function will be strictly positive for $x_0 \in (2^{-\frac{1}{2k}},\hat{x})$. Therefore this 4-periodic trajectory will be hyperbolic for all $x_0 \in (2^{-\frac{1}{2k}},\hat{x})$ and integer $k\geq 2$. 

Repeating the above calculations in the case when the rotation number is $3/4$ and $x_0 \in (-1,2^{-\frac{1}{2k}})$ yields the same equation for $\Tr S_4$ as (\ref{4PerTrace}). Similar to before, this expression will be larger than 2 for $x_0 \in (-1,2^{-\frac{1}{2k}})\setminus \{-2^{-\frac{1}{2k}}\}$ and be equal to 2 for $x_0 = -2^{-\frac{1}{2k}}$. Therefore the 4-periodic trajectory with rotation number $3/4$ is hyperbolic for all $x_0 \in (-1,2^{-\frac{1}{2k}})\setminus \{-2^{-\frac{1}{2k}}\}$ and parabolic for $x_0 = -2^{-\frac{1}{2k}}$. There is also the same duality between this trajectory and the trajectory with rotation number $1/4$ as is discussed in Remark \ref{4PeriodicRemark}.

\begin{figure}[thp]
\begin{tabular}{ l l } 
(a) \includegraphics[width = 0.40\textwidth]{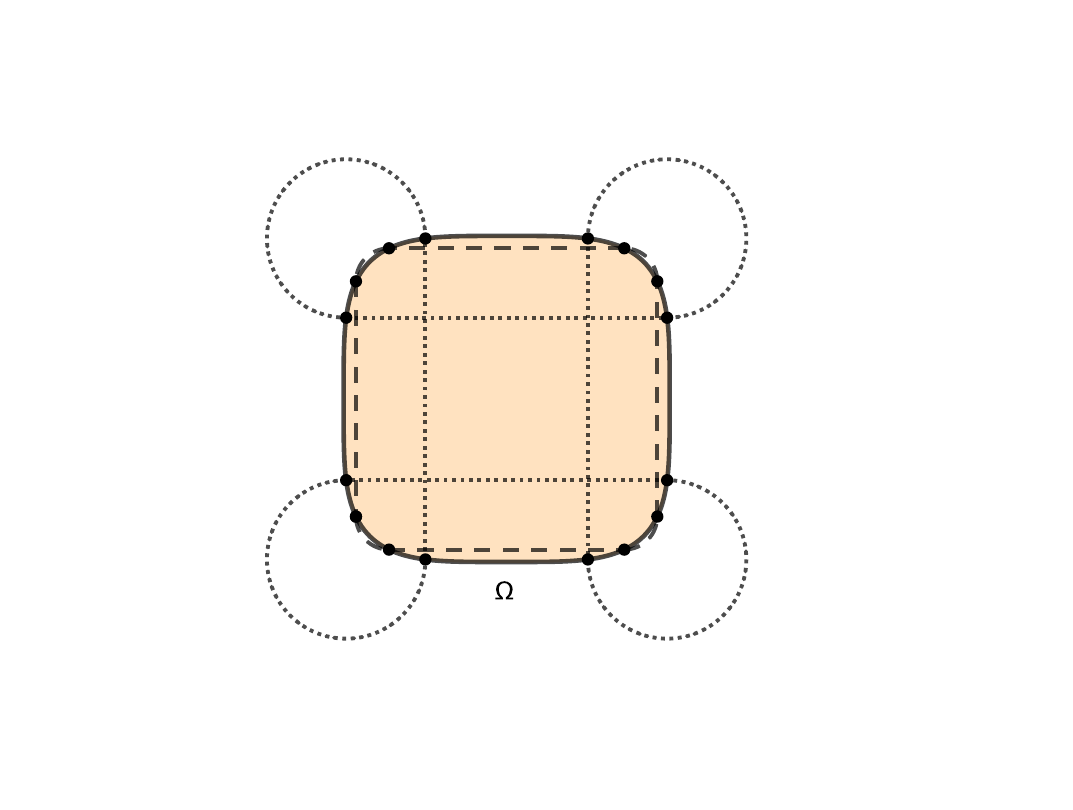}& (b) \includegraphics[width = 0.50\textwidth]{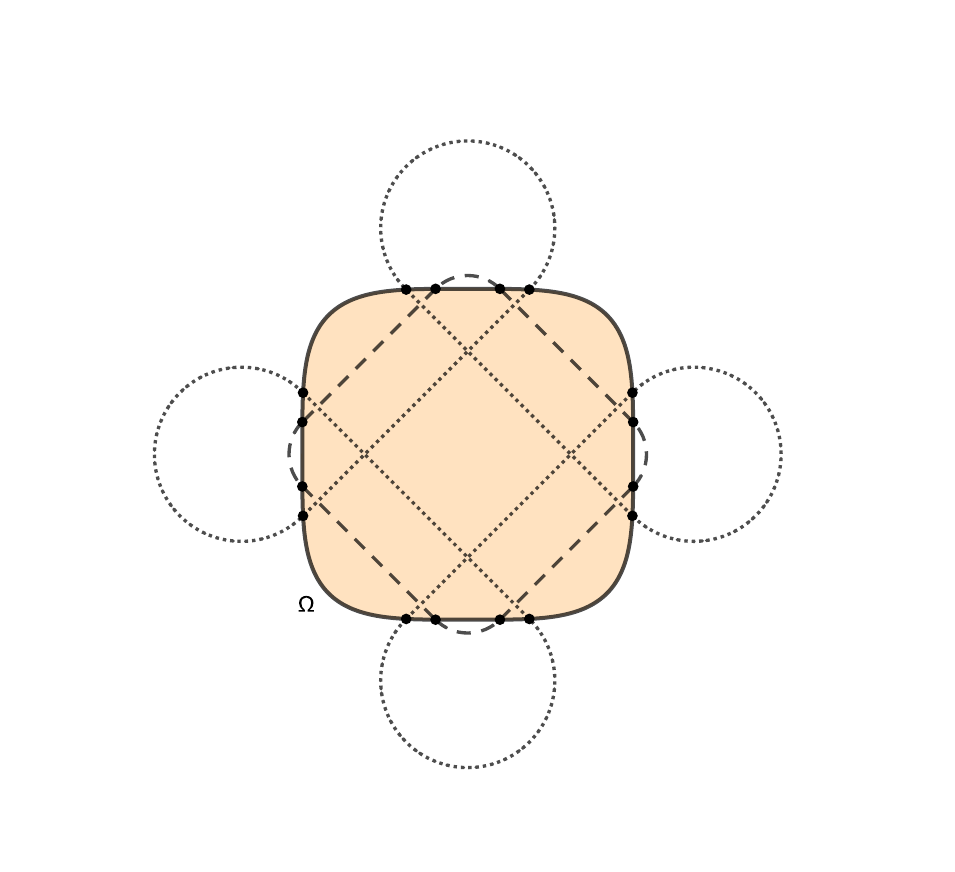} \\
\end{tabular}
\caption{Symmetric 4-periodic trajectories in the curve $x^{2k}+y^{2k}=1$ with rotation number $1/4$ (dashed) and $3/4$ (dotted), with different Larmor radii, and with Larmor centers on (a) the lines $y=\pm x$, and (b) on the coordinate axes. }
\label{4PeriodicSquarishDuality}
\end{figure}

Next, consider the case when the rotation number is $1/4$ and the Larmor centers are on the coordinate axes. Then, as before, all $\theta_i$, $\ell_{2i}$, $\ell_{2i+1}$, $\chi_{2i,2i+2}$ are constant. If the point $P_0 = (x_0,y_0)$ is in the first quadrant, then $x_0 \in (2^{-\frac{1}{2k}},1)$, $P_1 = (y_0,x_0)$, and $P_2 = (-y_0,x_0)$. Simple geometry yields
\begin{equation}
\begin{split}
\ell_1 &= \sqrt{2} (x_0-y_0) \\
\mu &= \sqrt{2} y_0 \\
\ell_2 &= 2y_0 \\
\tan(\theta) &= \frac{x_0^{2k}y_0-x_0 y_0^{2k}}{x_0^{2k}y_0+x_0 y_0^{2k}} 
\end{split}
\label{4PeriodicEquationsAgainAgain}
\end{equation} 
and the Larmor centers are at the points $(0,\pm(x_0-y_0))$ and $(\pm(x_0-y_0),0)$.

These quantities produce the following expression for $\Tr S_4$ in terms of $x_0$ and $y_0$:
\begin{align}
\begin{split}
\Tr S_4 &= 2 - \left[64x_0^{2k+1}y_0^{2k} \left(x_0^{2k}y_0^2 -x_0^2y_0^{2k}\right) \left(x_0^{2k}(x_0-2y_0) + 2x_0y_0^{2k}\right)  \right. \\
	&\;\;\;\;\; \cdot \left. \left(x_0^{4k}y_0^2-x_0^2 y_0^{4k} - 2(x_0-y_0)x_0^{2k+1}y_0^{2k}\right)^2  \right] / \left[ x_0^{2k}y_0 - x_0 y_0^{2k}\right]^8
\end{split}
\label{4PerTraceAgainAgain}
\end{align}

The above rational function equals zero at exactly one point, $x_0 = \check{x}$ in the interval $(2^{-\frac{1}{2k}},1)$. A quick analysis of this rational function and the above expression tells us the trajectory will be hyperbolic for $x_0 \in (2^{-\frac{1}{2k}},\check{x})$, parabolic for $x_0 = \check{x}$, and elliptic for $x_0 \in (\check{x},1)$.

We can repeat the above calculations in the case when the rotation number is $3/4$. In particular, this case is valid for $x_0 \in (-2^{-\frac{1}{2k}},1)$. Suppose $P_0 = (x_0,y_0)$ is in the first quadrant and $P_1 = (-y_0,x_0)$ is in the third quadrant. Then $P_2 = (y_0,-x_0)$ is in the fourth quadrant and the point $P_0$ and $P_1$ determine the start of a 4-periodic trajectory. In this case we have
\begin{equation}
\begin{split}
\ell_1 &= \sqrt{2} (x_0+y_0) \\
\mu &= \sqrt{2} y_0 \\
\ell_2 &= 2y_0 \\
\tan(\theta) &= \frac{x_0^{2k}y_0+x_0 y_0^{2k}}{x_0 y_0^{2k} - x_0^{2k}y_0} 
\end{split}
\label{4PeriodicEquationsAgainAgainAgain}
\end{equation} 
and the Larmor centers are at the points $(0,\pm(x_0+y_0))$ and $(\pm(x_0+y_0),0)$.

These quantities produce a similar expression for $\Tr S_4$ in terms of $x_0$ and $y_0$ as the previous case:
\begin{align}
\begin{split}
\Tr S_4 &= 2 - \left[64x_0^{2k}y_0^{2k-2} \left(x_0^{2k-2} -y_0^{2k-2}\right) \left(x_0^{2k-1}(x_0+2y_0) + y_0^{2k}\right)  \right. \\
	&\;\;\;\;\; \cdot \left. \left(x_0^{4k-2} - y_0^{4k-2} - 2(x_0+y_0)x_0^{2k-1}y_0^{2k-2}\right)^2  \right] / \left[ x_0^{2k-1} + y_0^{2k-1}\right]^8.
\end{split}
\label{4PerTraceAgainAgainAgain}
\end{align}
An analysis of this equation yields five values of $x_0 \in (-2^{-\frac{1}{2k}},1)$ for which $\Tr S_4$ is parabolic, call them $\tilde{x}_j$, $j \in \{1,\ldots, 5\}$ with $x_i < x_j$ for $i < j$. In particular $\tilde{x}_1 = 0$. Further analysis yields that the trajectory is 
\begin{itemize}
\item hyperbolic for $x_0 \in (-2^{-\frac{1}{2k}},\tilde{x}_1) \cup (\tilde{x}_1,\tilde{x}_2)$
\item parabolic for $x_0 \in \{\tilde{x}_1,\tilde{x}_2,\tilde{x}_3,\tilde{x}_4,\tilde{x}_5\}$
\item elliptic for $x_0 \in  (\tilde{x}_2, \tilde{x}_3) \cup (\tilde{x}_3,\tilde{x}_4) \cup (\tilde{x}_4,\tilde{x}_5)\cup (\tilde{x}_5,1)$. 
\end{itemize}

\end{example}

\section{Conclusions}
We have established a linear stability criterion for 2-periodic inverse magnetic billiard trajectories in convex and concave domains and have applied this to examples such as the ellipse and one-parameter family of curves given by $x^{2k}+y^{2k}=1$ for $k \in \N$. Further, comparisons have been made to the linear stability criteria for standard and magnetic billiards, noting the similarities (e.g. analogous stability results for billiards and inverse magnetic billiards in the ellipse) and differences (e.g. the linear stability of inverse magnetic billiards does not depend upon the curvature of the boundary at the exit and reentry points compared to the linear stability of standard billiards which does depend upon the curvature of the boundary at the reflection points). We also established the linear stability of symmetric 3- and 4-periodic inverse magnetic billiards trajectories in the same domains. Further, we made a geometric characterization of an isolated parabolic 2-periodic billiard trajectory in an ellipse in terms of rotation numbers and caustics. 

A further direction of research in this area is reconciling the following: the approach of \cite{KozlovTr} establishes linear stability criterion for billiards using the generating function $\mathcal{L}$ of the billiard map and its Hessian determinant evaluated at nondegenerate critical points. This criterion is identical to the criterion constructed via the stability matrix $S_n$. However, this generating function approach to linear stability in the setting of inverse magnetic billiards does not produce viable results in the 2-periodic case: the generating function $G$ for inverse magnetic billiards is not sufficiently differentiable when $\chi = \pi/2$. An equivalent statement in the magnetic billiards setting is made in \cite{BK} in the context of asymptotic expansions of the magnetic billiard map. The technique of \cite{KozlovTr} establishes a method to transform a degenerate critical point to a nondegenerate critical point, and finding magnetic and inverse magnetic analogues to this method constitute one path to overcoming this obstruction.

\section*{Acknowledgements}

This research is supported by the Discovery Project No.~DP190101838 \emph{Billiards within confocal quadrics and beyond} from the Australian Research Council. A portion of this research is based upon work supported by the National Science Foundation under Grant No. DMS-1440140 while the author was in residence at the Mathematical Sciences Research Institute in Berkeley, California, during the Fall 2018 semester.

\bibliographystyle{amsplain}
\nocite{*}
\bibliography{ReferencesStab}

\smallskip 

\hrule

\smallskip

\end{document}